\documentclass[11pt,reqno]{amsart}

\usepackage{amsmath}
\usepackage{amssymb}
\usepackage{amsthm}

\newtheorem{theorem}{Theorem}[section]
\newtheorem{prop}[theorem]{Proposition}
\newtheorem{lem}[theorem]{Lemma}

\theoremstyle{definition}
\newtheorem{defn}[theorem]{Definition}
\theoremstyle{remark}
\newtheorem*{rem}{Remark}
\numberwithin{equation}{section}

\hyphenation{non-commutative}
\hyphenation{non-degenerate}
\hyphenation{quasi-triangular}

\newcommand{\Gg}{\mathfrak{g}}    
\newcommand{\Gh}{\mathfrak{h}}
\newcommand{\Gq}{\mathfrak{q}}

\newcommand{\Gz}{\mathfrak{z}}

\begin{document}

\title[Quantum group constructed from a skew-symmetric matrix]
{A $(2n+1)$-dimensional quantum group constructed from a skew-symmetric matrix}

\author{Byung-Jay Kahng}
\date{}
\address{Department of Mathematics and Statistics\\ Canisius College\\
Buffalo, NY 14208}
\email{kahngb@canisius.edu}
\subjclass[2000]{46L65, 53D17, 81R50}
\keywords{Poisson--Lie group, Deformation quantization, Locally compact quantum group}

\begin{abstract}
Beginning with a skew-symmetric matrix, we define a certain Poisson--Lie group. 
Its Poisson bracket can be viewed as a cocycle perturbation of the linear (or 
``Lie--Poisson'') Poisson bracket.  By analyzing this Poisson structure, we gather 
enough information to construct a $C^*$-algebraic locally compact quantum group, 
via the ``cocycle bicrossed product construction'' method.  The quantum group thus 
obtained is shown to be a deformation quantization of the Poisson--Lie group, in the 
sense of Rieffel.
\end{abstract}
\maketitle

\section{Introduction}

It is generally understood that quantum groups are obtained by ``quantizing'' 
ordinary groups.  On the other hand, it is not always clear what we mean by this 
statement.  Typically, in the often used ``generators and relations method'' 
of constructing quantum groups, there exists a certain deformation parameter $q$ 
such that when $q=1$, the quantum group degenerates to the universal enveloping 
algebra of an ordinary group or the function algebra of an ordinary group.  See 
\cite{RTF}, \cite{Wr1}, and other examples.  While this is nice, the method of 
generators and relations is at best an indirect method, meaning that the correspondence 
information about how the pointwise product on the function algebra is deformed 
to an operator product is usually not apparent.

There are also some technical issues when working with the $q$-relations among the 
generators.  It is less of a problem in the case of a purely algebraic setting of 
quantized universal enveloping (QUE) algebras or that of compact quantum groups. 
However, when one wishes to construct a non-compact quantum group, the generators 
(coordinate functions of the group) tend to be unbounded, so things are more 
complicated.  There are ways to handle the difficulties (See \cite{Wr4}, where 
Woronowicz works with the notion of unbounded operators ``affiliated'' with 
$C^*$-algebras), but in general, it is usually better to look for some other methods 
of construction.

One useful approach not relying on the generators is the method of deformation 
quantization.  Here, the aim is to deform the (commutative) algebra of functions 
on a Poisson manifold, in the direction of the Poisson bracket.  See \cite{BFFL}, 
\cite{Vy}.  In the $C^*$-algebra framework, the corresponding notion is the 
``strict deformation quantization'' by Rieffel \cite{Rf1}, or its more generalized 
versions developed later by other authors.  To obtain a quantum group, one would 
begin with a suitable Poisson--Lie group $G$ (a Lie group equipped with a compatible 
Poisson bracket) and perform the deformation quantization on the function space 
$C_0(G)$---for both its algebra and coalgebra structures.

Some of the non-compact quantum groups obtained by deformation quantization are 
\cite{Rf5}, \cite{SZ}, \cite{Rf7}, \cite{Zk2}, \cite{BJKp2}.  In these examples, 
there exists a very close relationship between a quantum group and its Poisson--Lie 
group counterpart.  Indeed, the information at the level of Poisson--Lie groups 
or Lie bialgebras plays a key role in the construction of the quantum groups.

The interplay between the Poisson data and the quantum group can go further. 
For instance, as for the example constructed by the author \cite{BJKp2}, the 
information at the classical (Poisson) level was useful not only in the construction 
of the quantum group but also in studying its representation theory, in relation to 
the dressing orbits.  See \cite{BJKppdress}.

Despite many advantages, however, jumping from a Poisson--Lie group to the 
$C^*$-algebraic quantum group level is not always easy.  Deformation quantization 
only provides the ``spatial'' quantization.  Even with the guides suggested by the 
Poisson data, the actual construction of the structure maps like comultiplication, 
antipode, or Haar weight requires various specialized techniques.  Often, a method 
that works for some examples may not work for others.

Considering the drawbacks to the geometric approach above, we proposed in \cite{BJKjgp} 
to enhance the ``geometric'' (deformation quantization) approach by combining it with 
a more ``algebraic'' framework of {\em cocycle bicrossed products\/}  \cite{Mj1}, \cite{VV}.

The bicrossed product method is relatively simple, but sufficiently general to include 
many interesting examples.  Historically, it goes back to the group extension problems 
in the Kac algebra setting.  For a comprehensive treatment on constructing quantum 
groups using this framework, see \cite{VV}.  However, as is the case for any general 
method, having the framework is not enough to construct actual and specific examples: 
For this method to work, one needs to have a specific ``matched pair'' of groups (or 
quantum groups), together with a compatible cocycle.  

Our proposal, as given in \cite{BJKjgp}, is to begin first with a Poisson--Lie group 
and analyze its Poisson structure.  The Poisson data will help us obtain a suitable 
matched pair and a compatible cocycle.  If, in particular, the Poisson bracket is of 
the ``cocycle perturbation of the linear Poisson bracket'' type, in the sense of 
\cite{BJKp1}, then the deformation process can be made more precise.  Finally, using 
the matched pair and the cocycle data, we will perform the cocycle bicrossed product 
construction.

Quantum groups obtained in this way tend to have (twisted) crossed products as their 
underlying $C^*$-algebras.  And therefore, this program is usually best for constructing 
solvable-type quantum groups.  It is because crossed products often model quantized spaces 
(For instance, the ``Weyl algebra'', $C_0(\mathbb{R}^n)\rtimes_{\tau}\mathbb {R}^n$ with 
$\tau$ being the translation, is the quantized phase space \cite{Fophasespace}.).  But with 
some adjustments, the method could be adopted to construct other types of quantum groups. 

The previous paper \cite{BJKjgp} gave examples of some Poisson--Lie groups and implicitly 
indicated how one may be able to carry out the program, but it never contained any 
detailed construction.  Case~(1) of \cite{BJKjgp} is related with the examples from 
\cite{Rf5}, \cite{SZ}, \cite{VD}, while Case~(2) was studied in \cite{BJKp2}.  However, 
these earlier papers did not exactly take the approach that we are proposing here.

The reason behind writing this paper is that in addition to giving an example of a quantum 
group, we wanted to expand on our work in the previous paper \cite{BJKjgp} by providing 
a careful description of our construction method, taking the Case~(3) in that paper as 
a model.  The author was initially content with the brief description as given in the 
previous paper, but while visiting Leuven during November 2008, he was suggested by 
Professor Alfons Van Daele that it would be beneficial to give a fuller description of 
the example and the method.  This is done here.  We expect that our program can be used 
to construct other new examples in a similar way.  Moreover, since we would have a close, 
built-in connection between the Poisson--Lie group and the quantum group, we will be able 
to take advantage of the geometric data in further studying the quantum group and applications 
(for instance, dressing orbits on Poisson--Lie groups are closely related with the quantum 
group representations).

The paper is organized as follows.  In Section~2, using a given skew-symmetric matrix $J$, 
we define the Poisson--Lie group $G$ that we wish to quantize.  Its Poisson bracket is 
non-linear, but can be regarded as a ``cocycle perturbation'' of the linear (Lie-Poisson 
type) Poisson bracket.
The deformation quantization of the Poisson--Lie group $\bigl(G,\{\ ,\ \}\bigr)$ is 
carried out in Section~3.  The Poisson data helps us to define a certain multiplicative 
unitary operator (in the sense of Baaj and Skandalis \cite{BS}), and it enables us to 
define the $C^*$-bialgebra $(S,\Delta)$.  It is shown here that $(S,\Delta)$ is a strict 
deformation quantization (in the sense of Rieffel \cite{Rf1}, \cite{Rf4}) of the 
Poisson--Lie group $G$.  In Section~4, we realize that the construction we carry out 
in Section~3 is in fact a case of the cocycle bicrossed product construction, in the 
sense of \cite{VV}.  The result is that the $C^*$-bialgebra is indeed a locally compact 
quantum group.  To tie the loose ends, brief descriptions are given on the antipode map 
and the Haar weight on our quantum group $(S,\Delta)$.

\section{The Poisson--Lie group $G$}

\subsection{The group}

Let $n$ be an integer such that $n\ge2$, and let $J=(J_{ik})_{1\le i,k\le n}$ 
be an $n\times n$ skew-symmetric matrix.  So $J_{ki}=-J_{ik}$, for $1\le i,k\le n$. 
Then consider the $(2n+1)$-dimensional Lie algebra $\Gg$, spanned by the basis 
elements $\mathbf{p}_i,\mathbf{q}_i \ (i=1,\dots,n),\mathbf{r}$, satisfying the 
following relations: 
$$
[\mathbf{p}_i,\mathbf{p}_j]=0,\quad[\mathbf{q}_i,\mathbf{q}_j]=0,\quad
[\mathbf{p}_i,\mathbf{q}_j]=0,\quad[\mathbf{p}_i,\mathbf{r}]=
\sum_{k=1}^nJ_{ik}\mathbf{q}_k,\quad[\mathbf{q}_i,\mathbf{r}]=0,
$$
for $i,j=1,\dots,n$.  Since $J=(J_{ik})$ is skew, it is clear that $[\ ,\ ]$ is 
a valid Lie bracket.  Observe also that the $\mathbf{q}_j$ are central and that 
$\Gg$ is a two-step nilpotent Lie algebra.

It is not difficult to describe the corresponding Lie group.  The group $G$ has 
$\mathbb{R}^{2n+1}$ as its underlying space, and the multiplication on it is 
defined by
\begin{equation}\label{(groupmult)}
(p,q,r)(p',q',r')=\left(p+p',q+q'+r'\sum_{i,k=1}^n J_{ik}p_i
\mathbf{q}_k,r+r'\right).
\end{equation}
Here, $p,q,p',q'\in\mathbb{R}^n$ and $r,r'\in\mathbb{R}$.  For convenience, 
we are regarding $p\in\mathbb{R}^n$ as $p=p_1\mathbf{p}_1+p_2\mathbf{p}_2+\dots
+p_n\mathbf{p}_n$, and similarly for the other variables.  In other words, 
the multiplication law in \eqref{(groupmult)} could be also written as:
\begin{align}
(p,q,r)(p',q',r')&=\left(p_1+p'_1,\cdots,p_n+p'_n;\right.  \notag \\
&\left.\quad q_1+q'_1+r'\sum_{i=1}^n J_{i1}p_i,
\cdots,q_n+q'_n+r'\sum_{i=1}^n J_{in}p_i;r+r'\right).
\notag
\end{align}
The identity element is $e=(0,0,0)$, while the inverse element for $(p,q,r)\in G$ is:
$$
(p,q,r)^{-1}=\left(-p,-q+r\sum_{i,k=1}^n J_{ik}p_i\mathbf{q}_k,-r\right).
$$

The group $G$ is a (connected and simply connected) exponential solvable Lie group 
corresponding to $\Gg$.  We can identify $G\cong\Gg$ as vector spaces.  Note that 
an ordinary Lebesgue measure becomes a Haar measure for $G$.

In the following section, we will further show that $G$ is equipped with a compatible 
Poisson bracket, making it a Poisson--Lie group.  Our aim in this paper is to construct 
a locally compact quantum group that can be considered as a ``quantized $C_0(G)$''.

\subsection{Non-linear Poisson structure on $G$}

By general theory on Poisson--Lie groups (see \cite{CP}), any compatible Poisson 
structure on $G$ canonically determines a dual Poisson--Lie group, and vice versa. 
In fact, in our case, it is in some sense more convenient to consider first its 
dual counterpart $H=G^*$, which is shown to be a Poisson--Lie group.  We can then 
regard $G$ as the dual Poisson--Lie group of $H$.  The following discussion was 
first reported in our previous paper: see Section 1, Case~(3) of \cite{BJKjgp}.

\begin{defn}(Heisenberg Lie group)
Let $H$ be the $(2n+1)$-dimensional {\em Heisenberg Lie group\/}.  Its underlying 
space is $\mathbb{R}^{2n+1}$ and the multiplication on it is given by
$$
(x,y,z)(x',y',z')=\bigl(x+x',y+y',z+z'+\beta(x,y')\bigr),
$$
for $x,x',y,y'\in\mathbb{R}^n$ and $z,z'\in\mathbb{R}$.  Here, $\beta(\ ,\ )$ 
denotes the ordinary inner product.  So $\beta(x,y)=x\cdot y$, for $x,y\in\mathbb{R}^n$.

Its Lie algebra counterpart is the Heisenberg Lie algebra $\Gh$.  It is generated 
by the basis elements $\mathbf{x}_i,\mathbf{y}_i (i=1,\dots,n),\mathbf{z}$, with 
the following relations:
$$
[\mathbf{x}_i,\mathbf{y}_j]=\delta_{ij}\mathbf{z}, 
\quad [\mathbf{x}_i,\mathbf{x}_j]=[\mathbf{y}_i,\mathbf{y}_j]=0,
\quad [\mathbf{z},\mathbf{x}_i]=[\mathbf{z},\mathbf{y}_i]=0.
$$
\end{defn}

\begin{rem}
For convenience, we will identify $H\cong\Gh$ as vector spaces.  This is possible since 
$H$ is an exponential solvable Lie group (it is actually nilpotent).  And, we choose 
a Lebesgue measure on $H\cong\Gh$, which is in fact a Haar measure for $H$.  As in 
Section 2.1, we will understand that $x=x_1\mathbf{x}_1+\dots+x_n\mathbf{x}_n$, and similarly 
for the other variables.
\end{rem}

To describe the Poisson structure on $H$, it is equivalent to specify a ``Lie bialgebra'' 
structure $(\Gh,\delta)$.  See \cite{LW}, \cite{CP}, for the general theory on Poisson--Lie 
groups and Lie bialgebras.  In our case, the cobracket $\delta:\Gh\to\Gh\wedge\Gh$ is obtained 
from a certain {\em classical $r$-matrix\/}.  Details are given in the following proposition. 
See also Section 5 of \cite{BJKjgp}.

\begin{prop}
Let $r\in{\Gh}\otimes{\Gh}$ be defined by $r=\sum_{i,k=1}^n J_{ik}\mathbf{x}_k\otimes\mathbf{x}_i$. 
It is a skew solution of the ``classical Yang--Baxter equation'' (CYBE):
$$
[r^{12},r^{13}]+[r^{12},r^{23}]+[r^{13},r^{23}]=0.
$$
Therefore, it determines a ``triangular'' Lie bialgebra structure, $\delta:\Gh\to\Gh\wedge\Gh$, 
by $\delta(X)=\operatorname{ad}_X(r)$, $X\in\Gh$.  To be specific, we have:
$$
\delta(\mathbf{x}_k)=0, \quad
\delta(\mathbf{y}_k)
=\sum_{i=1}^n J_{ik}(\mathbf{x}_i\otimes\mathbf{z}-\mathbf{z}\otimes\mathbf{x}_i)
=\sum_{i=1}^n J_{ik}\mathbf{x}_i\wedge\mathbf{z}, \quad
\delta(\mathbf{z})=0,  
$$
for $k=1,\dots,n$.
\end{prop}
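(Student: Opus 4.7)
The plan is to verify each claim in turn: skew-symmetry of $r$, the CYBE, the general principle that a skew CYBE-solution yields a triangular Lie bialgebra, and the explicit cobracket values. All of these boil down to short computations exploiting the fact that $r$ is built only from the abelian generators $\mathbf{x}_i$ of $\Gh$.

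First I would note that $r$ is skew. Applying the flip $\tau$ to $r=\sum_{i,k}J_{ik}\mathbf{x}_k\otimes\mathbf{x}_i$ and relabeling $i\leftrightarrow k$ gives $\tau(r)=\sum_{i,k}J_{ki}\mathbf{x}_k\otimes\mathbf{x}_i=-r$ by skew-symmetry of $J$. Next, for the CYBE, the crucial observation is that in $U(\Gh)^{\otimes 3}$ each of the three commutators $[r^{12},r^{13}]$, $[r^{12},r^{23}]$, $[r^{13},r^{23}]$ reduces to a sum involving brackets of the form $[\mathbf{x}_a,\mathbf{x}_b]$ in one of the three tensor slots, and every such bracket vanishes in $\Gh$. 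Thus the CYBE is satisfied \emph{trivially}; no arithmetic with the $J_{ik}$ is needed.

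Once $r$ is a skew solution of the CYBE, the standard theory (as in \cite{CP}) guarantees that $\delta(X):=\operatorname{ad}_X(r)$ defines a triangular Lie bialgebra structure on $\Gh$, so only the explicit values remain. For these I would apply $\operatorname{ad}_X$ via the Leibniz rule $\operatorname{ad}_X(a\otimes b)=[X,a]\otimes b+a\otimes[X,b]$. On $\mathbf{x}_k$, both bracketings vanish since the $\mathbf{x}_i$ commute, giving $\delta(\mathbf{x}_k)=0$. On $\mathbf{z}$, the same happens since $[\mathbf{z},\mathbf{x}_i]=0$. On $\mathbf{y}_k$, the bracket $[\mathbf{y}_k,\mathbf{x}_j]=-\delta_{kj}\mathbf{z}$ introduces Kronecker deltas that collapse the double sum; using $J_{k\ell}=-J_{\ell k}$ to rewrite one of the two resulting terms yields exactly $\sum_i J_{ik}(\mathbf{x}_i\otimes\mathbf{z}-\mathbf{z}\otimes\mathbf{x}_i)$.

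The main obstacle is essentially nonexistent: there is no genuine difficulty because $r$ lies in the abelian subalgebra spanned by the $\mathbf{x}_i$. The only place where one has to be a bit careful is bookkeeping in the computation of $\delta(\mathbf{y}_k)$, where the two Leibniz terms initially look asymmetric and one must invoke the skew-symmetry of $J$ to combine them into the stated wedge-product form.
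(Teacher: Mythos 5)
Your proposal is correct and follows essentially the same route as the paper: the CYBE holds trivially because $r$ lives in the abelian subalgebra spanned by the $\mathbf{x}_i$, skewness follows from that of $J$, and the cobracket values come from a direct application of $\operatorname{ad}_X(a\otimes b)=[X,a]\otimes b+a\otimes[X,b]$. The only difference is that you spell out the ``straightforward computation'' for $\delta(\mathbf{y}_k)$ that the paper leaves to the reader, and your bookkeeping there (Kronecker deltas collapsing the double sum, then skew-symmetry of $J$ to combine the two Leibniz terms) checks out.
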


\begin{proof}
Since $\operatorname{span}(\mathbf{x}_i:i=1,2,\dots,n)$ is an abelian subalgebra of $\Gh$, 
the element $r$ trivially satisfies the CYBE.  It is also skew (i.\,e. $r^{12}+r^{21}=0$), 
because $J$ is a skew-symmetric matrix.  This means that $r$ is a triangular classical 
$r$-matrix. 

By general theory (see, for instance, \cite{Dr}, \cite{CP}), we thus obtain a coboundary 
Lie bialgebra structure, given by $\delta(X)=\operatorname{ad}_X(r)$, $X\in\Gh$, where 
$\operatorname{ad}_X(a\otimes b)=[X,a]\otimes b+a\otimes[X,b]$.  We can verify the results 
of the proposition by straightforward computation.
\end{proof}

Corresponding to the cobracket $\delta:\Gh\to\Gh\wedge\Gh$ given above, we can define 
a Lie bracket on the dual space ${\Gh}^*$ of $\Gh$ by $[\ ,\ ]=\delta^*:
{\Gh}^*\wedge{\Gh}^*\to{\Gh}^*$.  That is, $[\mu,\nu]$ is defined by
\begin{equation}\label{(delta^*)}
\bigl\langle[\mu,\nu],X\bigr\rangle=\bigl\langle\delta^*(\mu\otimes\nu),
X\bigr\rangle=\bigl\langle\mu\otimes\nu,\delta(X)\bigr\rangle,
\end{equation}
where $X\in\Gh$, $\mu,\nu\in{\Gh}^*$, and $\langle\ ,\ \rangle$ is the dual pairing between 
${\Gh}^*$ and $\Gh$.  It turns out that the ``dual'' Lie algebra structure on $\Gh^*$ coincides 
with the Lie algebra $\Gg$ described in the previous section.  See the proposition below 
(the proof is straightforward):

\begin{prop}\label{dualLiealgebrag}
Let $\Gg={\Gh}^*$ be spanned by $\mathbf{p}_i,\mathbf{q}_i(i=1,\dots,n),\mathbf{r}$, which form 
the dual basis of $\mathbf{x}_i,\mathbf{y}_i (i=1,\dots,n),\mathbf{z}$.  On $\Gg$, the Lie algebra 
relations can be defined by equation \eqref{(delta^*)}.  Then we have:
$$
[\mathbf{p}_i,\mathbf{p}_j]=0,\quad[\mathbf{q}_i,\mathbf{q}_j]=0,\quad
[\mathbf{p}_i,\mathbf{q}_j]=0,\quad[\mathbf{p}_i,\mathbf{r}]=
\sum_{k=1}^nJ_{ik}\mathbf{q}_k,\quad[\mathbf{q}_i,\mathbf{r}]=0,
$$
for $i,j=1,\dots,n$.  This is the Poisson dual of the Lie bialgebra $(\Gh,\delta)$.
\end{prop}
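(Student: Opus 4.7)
The plan is to verify the five bracket identities directly from the defining formula \eqref{(delta^*)} together with the explicit cobracket $\delta$ supplied by the previous proposition. Since a linear functional on $\Gh$ is determined by its values on $\mathbf{x}_k, \mathbf{y}_k, \mathbf{z}$, and since $\delta$ annihilates $\mathbf{x}_k$ and $\mathbf{z}$, each bracket $[\mu,\nu]$ in $\Gh^*$ is completely detected by pairing against $\mathbf{y}_k$, where $\delta(\mathbf{y}_k) = \sum_i J_{ik}(\mathbf{x}_i\otimes\mathbf{z}-\mathbf{z}\otimes\mathbf{x}_i)$. So the whole problem reduces to evaluating five pairings of the form $\langle\mu\otimes\nu,\delta(\mathbf{y}_k)\rangle$ and reading off the result through the dual basis $\{\mathbf{p}_i,\mathbf{q}_i,\mathbf{r}\}$.

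First I would dispose of the three vanishing brackets $[\mathbf{p}_i,\mathbf{p}_j]$, $[\mathbf{q}_i,\mathbf{q}_j]$, and $[\mathbf{p}_i,\mathbf{q}_j]$: in each case, expanding $\langle\mu\otimes\nu,\delta(\mathbf{y}_k)\rangle$ forces one tensor factor to be evaluated by $\mathbf{p}_\bullet$ or $\mathbf{q}_\bullet$ on $\mathbf{z}$ or on some $\mathbf{x}_m$, and by duality all such terms vanish. For $[\mathbf{p}_i,\mathbf{r}]$, exactly one of the two summands in $\delta(\mathbf{y}_k)$ survives, contributing $\sum_m J_{mk}\,\mathbf{p}_i(\mathbf{x}_m)\mathbf{r}(\mathbf{z})=J_{ik}$; since $\mathbf{q}_k$ is by definition the dual functional to $\mathbf{y}_k$, this identifies $[\mathbf{p}_i,\mathbf{r}]=\sum_k J_{ik}\mathbf{q}_k$. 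Lastly, $[\mathbf{q}_i,\mathbf{r}]$ kills both terms of $\delta(\mathbf{y}_k)$ for the same reason as the vanishing brackets.

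No step is genuinely difficult; the computation is pure bookkeeping against the dual basis, with the only modest subtlety being the correct expansion of $\mathbf{x}_i\wedge\mathbf{z}=\mathbf{x}_i\otimes\mathbf{z}-\mathbf{z}\otimes\mathbf{x}_i$ so the signs come out right. I would also want to remark, at least parenthetically, that the bracket produced on $\Gh^*$ in this way automatically satisfies the Jacobi identity: this is forced by the co-Jacobi identity for $\delta$, which itself follows from $r$ being a skew CYBE solution. Thus no further verification is needed to conclude that we really have a Lie algebra, and comparing the five relations to those defining $\Gg$ in Section 2.1 gives the identification claimed.
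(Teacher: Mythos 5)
Your verification is correct and is exactly the ``straightforward'' direct computation the paper has in mind (the paper itself omits the proof, remarking only that it is straightforward): pair $\mu\otimes\nu$ against $\delta$ on the basis of $\Gh$, note that only $\delta(\mathbf{y}_k)$ is nonzero, and read off the resulting functional in the dual basis. The parenthetical point that the Jacobi identity is automatic from the co-Jacobi identity for $\delta$ is a correct and welcome addition, but does not change the fact that this is the same argument.
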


Comparing the result of Proposition \ref{dualLiealgebrag} with the definition of the Lie algebra 
structure on $\Gg$ given in Section 2.1, we can see clearly that they are indeed the same.  This 
re-interpretation of our Lie algebra $\Gg$ means that $\Gg$ is actually a Lie bialgebra, being 
a dual Lie bialgebra of $(\Gh,\delta)$.  The cobracket on $\Gg$ is the dual map of the Lie bracket 
on $\Gh$.  A short calculation shows that the cobracket $\theta:\Gg\to\Gg\wedge\Gg$ takes its 
values on the basis vectors of $\Gg$ as follows:
$$
\theta(\mathbf{p}_i)=0,\quad\theta(\mathbf{q}_i)=0,
\quad\theta(\mathbf{r})=\sum_{i=1}^n(\mathbf{p}_i\otimes
\mathbf{q}_i-\mathbf{q}_i\otimes\mathbf{p}_i)=\sum_{i=1}^n
(\mathbf{p}_i\wedge\mathbf{q}_i). 
$$

We thus have the (Poisson dual) Lie bialgebra $(\Gg,\theta)$.  Let us now consider the corresponding 
Poisson--Lie group $G$ and its Poisson bracket.  See Proposition~\ref{PoissonLiegroupG} below. 
As before, we are regarding $p=p_1\mathbf{p}_1+p_2\mathbf{p}_2+\cdots+p_n\mathbf{p}_n$, and similarly 
for the other variables.

\begin{prop}\label{PoissonLiegroupG}
Let $G$ be the $(2n+1)$-dimensional Lie group, together with the multiplication law
$$
(p,q,r)(p',q',r')=\left(p+p',q+q'+r'\sum_{i,k=1}^n J_{ik}p_i\mathbf{q}_k,r+r'\right).
$$
This gives us the Lie group corresponding to $\Gg$ from Proposition \ref{dualLiealgebrag}. 
The Poisson bracket on $G$ is given by
$$
\{f,g\}(p,q,r)=r\bigl(\beta(x,y')-\beta(x',y)\bigr)+\frac{r^2}{2}\sum_{i,k=1}^n 
J_{ik}(y_ky'_i-y_iy'_k),
$$
for $f,g\in C^{\infty}(G)$.  Here, $df(p,q,r)=(x,y,z)$ and $dg(p,q,r)=(x',y',z')$, 
which are naturally viewed as elements of $\Gh$.
\end{prop}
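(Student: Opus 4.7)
The plan is to invoke Drinfeld's integration theorem: because $(\Gg,\theta)$ is a Lie bialgebra (established in the preceding discussion) and $G$ is connected and simply connected, there is a unique multiplicative Poisson bivector on $G$ whose linearization at the identity is $\theta$, and the Jacobi identity for such a bivector then follows automatically from the co-Jacobi of $\theta$. My task therefore reduces to three checks on the displayed formula: skew-symmetry, agreement of the linearization at $e$ with $\theta$, and multiplicativity.

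I would orient the computation by the natural decomposition into two summands. The linear-in-$r$ piece $r\bigl(\beta(x,y')-\beta(x',y)\bigr)$ is exactly the Kirillov--Kostant--Lie--Poisson bracket on $\Gg\cong G$ dual to the Heisenberg bracket on $\Gh=\Gg^*$: indeed, $\beta(x,y')-\beta(x',y)$ is the $\mathbf{z}$-component of $[(x,y),(x',y')]_\Gh$, and pairing $\mathbf{z}$ with $(p,q,r)\in\Gg$ produces the prefactor $r$. The quadratic-in-$r$ piece $\tfrac{r^2}{2}\sum_{i,k} J_{ik}(y_ky'_i-y_iy'_k)$ is a cocycle perturbation built from the classical $r$-matrix $r=\sum J_{ik}\mathbf{x}_k\otimes\mathbf{x}_i$ of the preceding proposition, in line with the ``cocycle perturbation of the linear Poisson bracket'' philosophy announced in the introduction. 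Skew-symmetry of the linear part is manifest; for the cocycle part it follows by swapping $y\leftrightarrow y'$ and relabeling $i\leftrightarrow k$ using $J_{ki}=-J_{ik}$. For the linearization at $e=(0,0,0)$, both summands vanish, and only the derivative in the $r$-direction of the linear part contributes, yielding $\sum_i(\mathbf{p}_i\wedge\mathbf{q}_i)$ as $d_e\pi(\mathbf{r})$; derivatives in the $p_i$ and $q_i$ directions vanish identically. This matches $\theta(\mathbf{r})=\sum_i\mathbf{p}_i\wedge\mathbf{q}_i$ and $\theta(\mathbf{p}_i)=\theta(\mathbf{q}_i)=0$.

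The main obstacle is the multiplicativity check $\pi(gg')=L_{g*}\pi(g')+R_{g'*}\pi(g)$, which I would verify as a direct coordinate computation using the group law \eqref{(groupmult)}. Since the $r$-coordinate of the product is simply additive and the $q$-coordinate receives the only nonlinear correction, the differentials $dL_g$ and $dR_{g'}$ act on $df$ only in the $q$-direction and linearly in $p$ and $r$, so pulling back $df$ and $dg$ along translations is tractable. The $(r+r')^2$ from the cocycle term at $gg'$ then splits cleanly into three pieces --- one depending only on $g'$, one only on $g$, and a cross term --- whose combination with the analogous splitting of the linear part matches $L_{g*}\pi(g')+R_{g'*}\pi(g)$ precisely because of the skew-symmetry of $J$. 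Apart from this bookkeeping, all the checks are routine once one fixes the identification $T^*G\cong G\times\Gh$ used implicitly throughout the statement.
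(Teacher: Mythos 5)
Your proposal is correct, but it runs the argument in the opposite direction from the paper. The paper's proof is constructive: following the standard recipe for integrating a Lie bialgebra, it exhibits an explicit group $1$-cocycle $F:G\to\Gg\wedge\Gg$ for the $\operatorname{Ad}$-representation, namely $F(p,q,r)=r\sum_{i}(\mathbf{p}_i\wedge\mathbf{q}_i)-\frac{r^2}{2}\sum_{i,k}J_{ik}\mathbf{q}_k\wedge\mathbf{q}_i$, checks that $dF_e=\theta$, and then \emph{derives} the displayed bracket as $\langle {R_{(p,q,r)}}_*F(p,q,r),df\wedge dg\rangle$, using ${R_{(p,q,r)}}_*(\mathbf{p}_i)=\mathbf{p}_i+r\sum_kJ_{ik}\mathbf{q}_k$ and ${R_{(p,q,r)}}_*(\mathbf{q}_i)=\mathbf{q}_i$. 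You instead take the formula as given and verify a posteriori that it is a skew multiplicative bivector linearizing to $\theta$ at $e$, then invoke Drinfeld's uniqueness theorem. The computational core is the same in both routes, since multiplicativity of $\pi$ is equivalent to the cocycle identity for $F(g)={R_{g^{-1}}}_*\pi(g)$; and your sketch of the multiplicativity check is accurate --- writing $\pi(p,q,r)=r\sum_i\partial_{p_i}\wedge\partial_{q_i}+\frac{r^2}{2}\sum_{i,k}J_{ik}\partial_{q_k}\wedge\partial_{q_i}$, one finds $L_{g*}\pi(g')+R_{g'*}\pi(g)=\pi(gg')$ with the $\frac{r^2}{2}+rr'+\frac{r'^2}{2}$ recombination you predict. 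One point worth making explicit, though: the cross term $rr'$ is produced by ${R_{g'}}_*$ twisting $\partial_{p_i}$ into the $q$-directions acting on the \emph{linear} summand $r\sum_i\partial_{p_i}\wedge\partial_{q_i}$, not by the quadratic summand itself; so the two pieces of your decomposition genuinely mix under translation, and neither is separately multiplicative for the non-abelian group law --- the check must be done for the sum. What the paper's version buys is that it produces the formula rather than presupposing it; what yours buys is a clean disposal of the Jacobi identity (via the general theorem that a multiplicative bivector on a connected group is Poisson once its intrinsic derivative is a Lie bialgebra cobracket), which the paper leaves implicit in the ``standard procedure'' of \cite{CP}.
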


\begin{proof}
Construction of $G$ from $\Gg$ was already described in equation \eqref{(groupmult)}.  To find 
the expression for the Poisson bracket, we follow the standard procedure \cite{CP}.  See also 
Proposition 2.3 of \cite{BJKjgp}.  A similar computation (for a different Poisson structure) 
can be found in the proof of Theorem 2.2 of \cite{BJKp2}.

First, consider $\operatorname{Ad}:G\to\operatorname{Aut}(\Gg)$, the adjoint representation 
of $G$ on $\Gg$.  Then we look for a map $F:G\to\Gg\wedge\Gg$, that is a group 1-cocycle on 
$G$ for the $\operatorname{Ad}$-representation and whose derivative at the identity element, 
$dF_e$, coincides with $\theta$ above.  Or, $dF_{(0,0,0)}=\theta$.  In general, integrating 
$\theta$ to $F$ is not always easy.  However, in our case, it is not difficult to check that 
the following map $F$ indeed satisfies the requirements above:
$$
F(p,q,r)=r\sum_{i=1}^n(\mathbf{p}_i\wedge\mathbf{q}_i)-\frac
{r^2}{2}\sum_{i,k=1}^n (J_{ik}\mathbf{q}_k\wedge\mathbf{q}_i).
$$

Since we have the 1-cocycle $F$, the Poisson bivector field is then obtained by the right 
translation of $F$.  To compute, suppose $f,g\in C^{\infty}(G)$.  For $(p,q,r)\in G$, 
since $df=df_e$ is the (linear) differential of the scalar-valued map $f$ on $G$, and 
since $\Gg$ is the tangent space of $G$ at its identity $e=(0,0,0)$, we can naturally 
identify $df(p,q,r)$ as an element in $\Gg^*=\Gh$.  Similarly for $dg(p,q,r)$.  So write 
$df(p,q,r)=(x,y,z)$ and $dg(p,q,r)=(x',y',z')$.  Noting that ${R_{(p,q,r)}}_*(\mathbf{p}_i)
=\mathbf{p}_i+r\sum_{k=1}^n J_{ik}\mathbf{q}_k$ and ${R_{(p,q,r)}}_*(\mathbf{q}_i)
=\mathbf{q}_i$ under the right translation, we have:
\begin{align}
\{f,g\}(p,q,r)&=\bigl\langle {R_{(p,q,r)}}_* F(p,q,r),
df(p,q,r)\wedge dg(p,q,r)\bigr\rangle  \notag \\
&=r\bigl(\beta(x,y')-\beta(x',y)\bigr)+\frac{r^2}{2}
\sum_{i,k=1}^n J_{ik}(y_ky'_i-y_iy'_k).  \notag
\end{align}
\end{proof}

We can see from Proposition \ref{PoissonLiegroupG} that we thus have a non-linear Poisson bracket 
on our group $G$.  When $J=O$ (zero matrix), it becomes linear, carrying only the part that comes 
from the Lie algebra structure on $\Gh=\Gg^*$.  Ours is actually a ``cocycle perturbation'' of 
the linear Poisson bracket, as introduced in \cite{BJKp1}.  See Section~3.1 below for further 
discussion.

\section{Deformation quantization of $G$}

Now that we have described our Poisson--Lie group $G$, we wish to construct its quantum group 
counterpart.  The Poisson data should guide our direction of quantization.

In Section 3.2 of \cite{BJKjgp}, we obtained a $C^*$-bialgebra that can be reasonably considered 
as a quantum semigroup corresponding to $G$.  The method was via a ``cocycle bicrossed product'' 
construction, as in \cite{VV} (see also Section 8 of \cite{BS}).  However, the full construction 
of the quantum group was not carried out, for instance the existence proof of an appropriate 
Haar weight.  In addition, it will be desirable to show a more comprehensive relationship between 
the Poisson--Lie group and the quantum group, including the deformation picture.  We will fill in 
these gaps as we review and improve on our quantum group construction.

\subsection{Poisson bracket of the cocycle perturbation type}

The Poisson bracket on $G$, as obtained in Proposition~\ref{PoissonLiegroupG} above, is of 
the ``cocycle perturbation'' type studied in Theorem 2.2 and Theorem 2.3 of \cite{BJKp1}. 
Let us be more specific.

Since we are identifying $G\cong\Gg$, our Poisson bracket on $G$ may be also regarded as 
a Poisson bracket on $\Gg=\Gh^*$, where $\Gh$ is the Heisenberg Lie algebra noted earlier. 
Let $\Gz$ denote the center of $\Gh$, spanned by the basis element $\mathbf{z}\in\Gh$, and 
let us write $\Gq=\Gz^{\bot}\subseteq\Gg$.  Then we may regard the $x,y,x',y'\in\mathbb{R}^n$ 
as elements of $\Gh/\Gz=\operatorname{span}(\mathbf{x}_i,\mathbf{y}_i:i=1,\dots,n)$ and the 
$r\in\mathbb{R}$ as elements of $\Gg/\Gq$.

Consider the vector space $V=C^{\infty}(\Gg/\Gq)$, and give it the trivial 
$U(\Gh/\Gz)$-module structure.  Suggested by the Poisson bracket expression given in 
Proposition~\ref{PoissonLiegroupG}, let $\omega:\Gh/\Gz\times\Gh/\Gz\to V$ be defined by 
\begin{equation}\label{(Liecocycle)}
\omega\bigl((x,y),(x',y');r\bigr)=r\bigl(\beta(x,y')-\beta(x',y)\bigr)+\frac{r^2}{2}
\sum_{i,k=1}^n J_{ik}(y_ky'_i-y_iy'_k).
\end{equation}
Then $\omega$ is clearly a skew-symmetric bilinear map, and is a Lie algebra cocycle for 
$\Gh/\Gz$, trivially since $\Gh/\Gz$ is an abelian Lie algebra.

Meanwhile, with $\Gh/\Gz$ being abelian, the linear (or ``Lie--Poisson'') Poisson bracket on 
$(\Gh/\Gz)^*$ is the trivial one.  Therefore, our Poisson bracket on $\Gh^*$ is essentially 
the sum of the (trivial) linear Poisson bracket on $(\Gh/\Gz)^*$ and the cocycle $\omega$. 
We thus have the following conclusion:

\begin{prop}
Consider the Poisson bracket on $G$, obtained in Proposition~\ref{PoissonLiegroupG}, which 
is also regarded as defined on $\Gg=\Gh^*$.  It is a ``cocycle perturbation'' of the 
linear Poisson bracket on $\Gh^*$, in the sense of \cite{BJKp1}.
\end{prop}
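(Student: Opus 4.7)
The plan is to unwind the definitions of \cite{BJKp1} and check that the three ingredients of a cocycle perturbation---a central extension of Lie algebras, a Lie algebra $2$-cocycle on the quotient taking values in $C^{\infty}$ of the dual of the center, and the assembled Poisson bracket on the total dual space---have all already been laid out in the paragraph immediately preceding the proposition. Here the central extension is $0\to\Gz\to\Gh\to\Gh/\Gz\to 0$ with $\Gz$ central because $\Gh$ is Heisenberg, and dualization gives the splitting $\Gg=\Gh^{*}=\Gq\oplus(\Gg/\Gq)$ compatible with the coordinates $(p,q,r)\in G\cong\Gg$; the $2$-cocycle is the $\omega$ of \eqref{(Liecocycle)}; and the bracket to be recovered is that of Proposition~\ref{PoissonLiegroupG}.

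Next I would verify the cocycle property of $\omega$. Since $\Gh/\Gz$ is abelian and $V=C^{\infty}(\Gg/\Gq)$ is endowed with the trivial $U(\Gh/\Gz)$-module structure, the Chevalley--Eilenberg $2$-cocycle condition collapses to bilinearity and skew-symmetry in the first two slots; both are manifest from the defining formula, the skew-symmetry of $J$ taking care of the quadratic-in-$r$ term. Simultaneously, since $\Gh/\Gz$ is abelian, the linear (Lie--Poisson) bracket on $(\Gh/\Gz)^{*}=\Gq$ is identically zero, so the entire perturbed bracket is expected to come from the $\omega$ contribution alone.

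Finally I would match the bracket assembled from the cocycle perturbation data against the one of Proposition~\ref{PoissonLiegroupG}. For $f,g\in C^{\infty}(G)$, write $df(p,q,r)=(x,y,z)$ and $dg(p,q,r)=(x',y',z')$ as elements of $\Gh$; their projections to $\Gh/\Gz$ are $(x,y)$ and $(x',y')$, while $r$ is read as an element of $\Gg/\Gq$. A direct comparison with \eqref{(Liecocycle)} shows that the formula of Proposition~\ref{PoissonLiegroupG} is literally $\omega\bigl((x,y),(x',y');r\bigr)$. The one substantive point to watch is the bookkeeping: one must confirm that evaluating the $V$-valued cocycle at the $\Gg/\Gq$-coordinate of a point of $G$ corresponds to the Poisson bivector field obtained by right-translating the group $1$-cocycle $F$ built in the proof of Proposition~\ref{PoissonLiegroupG}, so that the two perspectives truly describe the same Poisson structure on $G$ in the sense required by \cite{BJKp1}.
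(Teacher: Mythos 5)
Your proposal is correct and follows essentially the same route as the paper: both identify the central extension $0\to\Gz\to\Gh\to\Gh/\Gz\to 0$, observe that the linear Lie--Poisson bracket on $(\Gh/\Gz)^*$ is trivial so that the bracket of Proposition~\ref{PoissonLiegroupG} is exactly the $V$-valued cocycle $\omega$ of equation~\eqref{(Liecocycle)}, and defer the remaining technicalities to Theorems 2.2--2.3 of \cite{BJKp1}. The only cosmetic difference is that the paper packages the final verification as the identity $\{f,g\}(\mu)=\bigl[df(\mu),dg(\mu)\bigr]_{\Gh+V}(\mu)$ for the centrally extended (``perturbed'') Lie bracket on $\Gh+V\subseteq C^{\infty}(\Gg)$, which is precisely the bookkeeping point you flag at the end of your argument.
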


\begin{proof}
The functions in $V=C^{\infty}(\Gg/\Gq)$ can be canonically realized as functions in 
$C^{\infty}(\Gg)$, by the ``pull-back'' using the natural projection of $\Gg$ onto 
$\Gg/\Gq$.  In addition, the elements in $\Gh$ are linear functions on $\Gg$.  We thus 
have $\Gh+V\subseteq C^{\infty}(\Gg)$, whereas $\Gh\cap V=\Gz$.

Meanwhile, the cocycle $\omega$ on $\Gh/\Gz$ (which takes values in $V$) naturally determines 
a Lie bracket on $\Gh/\Gz\oplus V$, by central extension.  Since $\Gh\cap V=\Gz$, we see that 
$\Gh/\Gz\oplus V\cong\Gh+V$, as vector spaces.  Under this spatial isomorphism, we can thus 
transfer the Lie bracket on $\Gh/\Gz\oplus V$ to a Lie bracket on $\Gh+V$, denoted by 
$[\ ,\ ]_{\Gh+V}$.  This Lie bracket is essentially a ``perturbed Lie bracket'' of the Lie bracket 
on $\Gh$.

With $\Gh+V\subseteq C^{\infty}(\Gg)$, we can give an alternative interpretation of our Poisson 
bracket in Proposition~\ref{PoissonLiegroupG}, as follows:
$$
\{f,g\}(\mu)=\bigl[df(\mu),dg(\mu)\bigr]_{\Gh+V}(\mu),
$$
where $\mu\in\Gg$.  Here, $df(\mu),dg(\mu)\in\Gh(\subseteq\Gh+V)$ as shown in the 
proof of Proposition~\ref{PoissonLiegroupG}; the bracket operation in $\Gh+V$ is as described 
in the previous paragraph; and we are regarding an element in $\Gh+V$ as a function contained 
in $C^{\infty}(\Gg)$.  Having come from the ``perturbed Lie bracket'' of the Lie bracket on 
$\Gh$, our (non-linear) Poisson bracket is a ``cocycle perturbation'' of the linear Poissson 
bracket.

For a more detailed discussion, including some technicalities involving the cocycles, 
refer to Theorem 2.2 and Theorem 2.3 of \cite{BJKp1}, and the paragraphs about the theorems.
\end{proof}

\begin{rem}
When $J=O$ (zero matrix), the cocycle $\omega$ given in equation \eqref{(Liecocycle)} becomes:
$$
\omega_{J=O}\bigl((x,y),(x',y');r\bigr)=r\bigl(\beta(x,y')-\beta(x',y)\bigr).
$$
It is a linear function on $\Gg/\Gq$, so we may write it as:
$$
\omega_{J=O}\bigl((x,y),(x',y')\bigr)=\bigl(\beta(x,y')-\beta(x',y)\bigr)\mathbf{z},
$$
where $\mathbf{z}$ is the basis vector spanning $\Gz$.  In other words, $\omega_{J=O}$ 
is a cocycle for $\Gh/\Gz$ having values in $\Gz$.  It determines the Lie bracket on 
$\Gh$, and therefore, it corresponds to the linear (Lie-Poisson) Poisson bracket 
on $\Gh^*$.  What all this means is that the ``perturbation'' in our case is encoded 
by the matrix $J$ and the associated cocycle $\omega$.
\end{rem}

\subsection{The bicrossed product construction}

Since we realized our Poisson bracket as a cocycle perturbation of the linear Poisson bracket, 
we may follow the steps given in Section 3 of \cite{BJKp1} to construct a deformation 
quantization of $\bigl(C_0(G),\{\ ,\ \}\bigr)$.  The method would use the framework of 
{\em twisted crossed product $C^*$-algebras\/}, in the sense of Packer and Raeburn \cite{PR}.  

However, this method, while valid, gives only the deformation at the $C^*$-algebra level. 
Since we are interested in the construction of a quantum group, let us employ a different 
approach, following instead the one given in \cite{BJKjgp}.  This approach is based on 
the ``bicrossed product construction'' of Vaes and Vainerman \cite{VV}, as well as the 
earlier work by Baaj and Skandalis (Section 8 of \cite{BS}).  Clarification of the 
deformation picture will be postponed to Section 3.3 below.

First, from our Lie algebra cocycle $\omega$ given in equation \eqref{(Liecocycle)}, we obtain 
a continuous family of $\mathbb{T}$-valued group cocycles for the Lie group $H/Z$ of $\Gh/\Gz$. 

\begin{prop}\label{groupcocycle}
Fix an element $r\in\Gg/\Gq$, and define the map $\sigma^r:H/Z\times H/Z\to\mathbb{T}$ by
$$
\sigma^r\bigl((x,y),(x',y')\bigr)=\bar{e}\bigl[r\beta(x,y')\bigr]\bar{e}\left[\frac{r^2}{2}
\sum_{i,k=1}^n J_{ik}y_ky'_i\right],
$$
where $e[t]=e^{2\pi it}$, and so $\bar{e}[t]=e^{-2\pi it}$.  Then each $\sigma^r$ is 
a $\mathbb{T}$-valued, normalized group cocycle for $H/Z$.  In addition, $r\mapsto\sigma^r$ 
forms a continuous field of cocycles.
\end{prop}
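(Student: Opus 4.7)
The starting point is the observation that $\Gz$ is the center of $\Gh$, so $\Gh/\Gz$ is abelian and the Lie group $H/Z$ identifies with $(\mathbb{R}^{2n},+)$ under coordinate addition $(x,y)+(x',y')=(x+x',y+y')$. Consequently, the multiplicative $\mathbb{T}$-valued 2-cocycle identity for $\sigma^r$ reduces to an additive $\mathbb{R}$-valued cocycle identity in the exponent, which is very rigid to check.

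The plan is to write
$$
\sigma^r\bigl((x,y),(x',y')\bigr)=\bar{e}\bigl[c_r\bigl((x,y),(x',y')\bigr)\bigr],\qquad c_r\bigl((x,y),(x',y')\bigr)=r\beta(x,y')+\frac{r^2}{2}\sum_{i,k=1}^n J_{ik}y_ky'_i,
$$
and then to verify the additive cocycle identity
$$
c_r(u,v)+c_r(u+v,w)=c_r(u,v+w)+c_r(v,w)
$$
for $u=(x,y),v=(x',y'),w=(x'',y'')\in H/Z$. Since $c_r$ is a sum of two expressions, each bilinear in the obvious pair of arguments ($\beta$ is bilinear, and the $J$-term is bilinear in the relevant $y$-coordinates), the verification splits into two one-line bilinearity expansions; no use is made of the skew-symmetry of $J$ at this stage. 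Applying the character $\bar{e}$ then gives the multiplicative cocycle property of $\sigma^r$. Normalization $\sigma^r(0,\cdot)=\sigma^r(\cdot,0)=1$ is immediate from the defining formula, since every monomial in $c_r$ contains a factor from each of its two arguments.

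For the continuous field statement, I would simply note that $c_r\bigl((x,y),(x',y')\bigr)$ is a polynomial in $r$ and in the coordinates of its two arguments, so $(r,(x,y),(x',y'))\mapsto\sigma^r\bigl((x,y),(x',y')\bigr)$ is jointly continuous. In particular $r\mapsto\sigma^r$ is continuous uniformly on compact subsets of $(H/Z)\times(H/Z)$, which is the sense required for a continuous field of $\mathbb{T}$-valued cocycles.

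I do not anticipate any genuine obstacle here: the entire argument is a bilinearity identity plus a triviality about continuity of polynomials composed with $e^{-2\pi i(\cdot)}$. Conceptually, the only content is that, because $H/Z$ is abelian, the $V$-valued Lie algebra cocycle $\omega$ of equation \eqref{(Liecocycle)}, once $r$ is fixed and viewed as a scalar, admits an honest polynomial primitive $c_r$ on $(H/Z)\times(H/Z)$, rather than merely a projective one. As a built-in sanity check, a short computation using skew-symmetry of $J$ shows that $c_r(u,v)-c_r(v,u)=\omega(u,v;r)$, so the asymmetric primitive $c_r$ recovers the Poisson data exactly upon antisymmetrization, in the spirit of a Moyal-type phase.
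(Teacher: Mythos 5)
Your proposal is correct and follows essentially the same route as the paper: the paper verifies the multiplicative cocycle identity directly by expanding both sides (which is exactly your additive/bilinearity computation written in exponential form), checks normalization at the identity, and notes continuity is clear from the formula. Your reformulation via the $\mathbb{R}$-valued primitive $c_r$ and the antisymmetrization remark recovering $\omega$ are accurate but cosmetic additions to the same argument.
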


\begin{proof}
Let $h=(x,y)$, $h'=(x',y')$, $h''=(x'',y'')$ be elements of $H/Z$, which is just an abelian 
group under addition.  We can easily verify the cocycle identity, as follows:
\begin{align}
&\sigma^r(hh',h'')\sigma^r(h,h')   \notag \\
&=\bar{e}\bigl[r\bigl(\beta(x,y'')+\beta(x',y'')+\beta(x,y')\bigr)\bigr]
\bar{e}\left[\frac{r^2}{2}\sum_{i,k=1}^n J_{ik}(y_ky''_i+y'_ky''_i+y_ky'_i)\right]   \notag \\
&=\sigma^r(h,h'h'')\sigma(h',h'').
\notag
\end{align}
We also have: $\sigma^r(h,0)=1=\sigma^r(0,h)$, where $0=(0,0)$ is the identity element of $H/Z$. 
From the definition, the continuity is quite clear.
\end{proof}

\begin{rem}
In general, constructing the group cocycle by ``integrating'' the Lie algebra cocycle is not 
necessarily easy.  For a little more discussion on this matter, see Section 3 of \cite{BJKp1}.
\end{rem} 

Our Poisson bracket from Proposition~\ref{PoissonLiegroupG} and the group cocycle arising from it, 
as obtained in Proposition~\ref{groupcocycle} above, strongly suggest that it will be most convenient 
for us to work with the $(x,y;r)$ variables, where $(x,y)\in H/Z$ and $r\in\Gg/\Gq=\Gh^*/\Gz^{\bot}$. 
Dual space to $H/Z$ is $(\Gh/\Gz)^*=\Gz^{\bot}$, whose elements are the $(p,q)$.  Following this 
observation, we will break our group $G$ into two parts, obtaining the following matched pair 
of groups.

\begin{defn}\label{matchedpair}
Let $G_1$ and $G_2$ be subgroups of $G$, defined by
$$
G_1=\bigl\{(0,0,r):r\in\mathbb{R}\bigr\},\qquad G_2=\bigl\{(p,q,0):p,q\in\mathbb{R}^n\bigr\}.
$$
Clearly, as a space $G\cong G_2\times G_1$, while $G_1$ and $G_2$ are closed subgroups of $G$, 
such that $G_1\cap G_2=\bigl\{(0,0,0)\bigr\}$.  Moreover, any element $(p,q,r)\in G$ can be 
(uniquely) expressed as a product: $(p,q,r)=(0,0,r)(p,q,0)$, with $(0,0,r)\in G_1$ and 
$(p,q,0)\in G_2$.  In other words, the groups $G_1$ and $G_2$ form a {\em matched pair\/} 
(Or, {\em couple assorti\/} as in Section 8 of \cite{BS}.).

From the matched pair $(G_1,G_2)$, we naturally obtain the group actions $\alpha:G_1\times G_2
\to G_2$ and $\gamma:G_2\times G_1\to G_1$, defined by
$$
\alpha_r(p,q):=\left(p,q-r\sum_{i,k=1}^n J_{ik}p_i\mathbf{q}_k\right),
\qquad\gamma_{(p,q)}(r):=r.
$$
Here we are using the obvious identification of $(p,q)$ with $(p,q,0)$, and similarly for $r$ 
and $(0,0,r)$.  Note that these actions are defined so that we have: $\bigl(\alpha_r(p,q)\bigr)
\bigl(\gamma_{(p,q)}(r)\bigr)=\left(p,q-r\sum_{i,k} J_{ik}p_i\mathbf{q}_k,0\right)(0,0,r)
=(p,q,r)$.
\end{defn}

Let us now convert the information we obtained so far into the language of Hilbert space operators 
and operator algebras.  Recall that we chose a Lebesgue measure on $H(=\Gh)$, which is the 
Haar measure for $H$.  On $G(=\Gg=\Gh^*)$, which is considered as the dual vector space of $H$, 
we give the dual Lebesgue measure.  This will be also the Haar measure for $G$.  These measures 
are chosen so that the Fourier transform becomes the unitary operator (from $L^2(H)$ to $L^2(G)$), 
and the Fourier inversion theorem holds.  Similarly, ``partial'' Fourier transform can be considered, 
for instance, between functions in the $(p,q;r)$ variables and those in the $(x,y;r)$ variables.  See 
Remark 1.7 of \cite{BJKp2}.

First, we define the multiplicative unitary operators $X\in{\mathcal B}\bigl(L^2(G_1\times G_1)\bigr)$ 
and $Y\in{\mathcal B}\bigl(L^2(G_2\times G_2)\bigr)$, associated with the groups $G_1$ and $G_2$. 
See \cite{BS}.  Namely, define:
$$
X\xi(r;r')=\xi(r+r';r'),\qquad Y\zeta(p,q;p',q')
=\zeta(p+p',q+q';p',q'),
$$
for $\xi\in L^2(G_1\times G_1)$ and $\zeta\in L^2(G_2\times G_2)$.  By Fourier transform, 
${\mathcal F}:L^2(G_2)\cong L^2(H/Z)$, the operator $Y$ can be also expressed as an operator 
in ${\mathcal B}\bigl(L^2(H/Z\times H/Z)\bigr)$, in the $(x,y)$ variables.  In other words, 
for convenience, we will regard ${\mathcal F}^{-1}Y{\mathcal F}$ as same as $Y$.  We then have:
$$
Y\zeta(x,y;x',y')=\zeta(x,y;x'-x,y'-y),\quad\zeta\in L^2(H/Z).
$$

\begin{rem}
By the theory of multiplicative unitary operators (see \cite{BS}), the operator $X$ determines 
the (mutually dual) $C^*$-bialgebras $C_0(G_1)$ and $C^*(G_1)$, and similarly, the operator 
$Y$ determines the $C^*$-bialgebras $C_0(G_2)$ and $C^*(G_2)$.  Working with the $(x,y)$ variables, 
by the Fourier transform, we have: $C_0(G_2)\cong C^*(H/Z)$ and $C^*(G)\cong C_0(H/Z)$.  Since 
the groups are abelian, all the computations are quite simple. 

For convenience, a function $f\in C_0(G_1)$ will be considered same as the multiplication 
operator $L_f\in{\mathcal B}\bigl(L^2(G_1)\bigr)$, defined by $L_f\xi(r)=f(r)\xi(r)$.  Similar 
for $g\in C_0(G_2)$, which will be also considered as the multiplication operator $\lambda_g
\in{\mathcal B}\bigl(L^2(G_2)\bigr)$.  In the $(x,y)$ variables, this is equivalent to 
saying that for $g\in C_c(H/Z)\subseteq C^*(H/Z)$, the operator $L_g\in{\mathcal B}
\bigl(L^2(H/Z)\bigr)$ is such that for $\zeta\in L^2(H/Z)$, we have: $L_g\zeta(x,y)
=\int g(\tilde{x},\tilde{y})\zeta(x-\tilde{x},y-\tilde{y})\,d\tilde{x}d\tilde{y}$.
\end{rem}

Next, we try to encode the actions $\alpha$ and $\gamma$ into an operator.  Note that at the level 
of the $C^*$-algebras $C_0(G_1)$ and $C_0(G_2)$, the group actions $\alpha$ and $\gamma$ we defined 
above (though $\gamma$ is trivial) are expressed as coactions $\alpha:C_0(G_2)\to M\bigl(C_0(G_2)
\otimes C_0(G_1)\bigr)$ and $\gamma:C_0(G_1)\to M\bigl(C_0(G_2)\otimes C_0(G_1)\bigr)$, given by
\begin{align}
\alpha(g)(p,q;r)&=g\left(p,q-r\sum_{i,k=1}^n J_{ik}p_i\mathbf{q}_k\right)
=g\bigl(\alpha_r(p,q)\bigr),  \notag \\
\gamma(f)(p,q;r)&=f(r)=f\bigl(\gamma_{(p,q)}(r)\bigr).  \notag
\end{align}
The coactions $\alpha$ and $\gamma$ can be realized using a certain unitary operator $Z$, 
as follows:

\begin{prop}\label{Z}
Let $Z\in{\mathcal B}\bigl(L^2(G)\bigr)={\mathcal B}\bigl(L^2(G_2\times G_1)\bigr)$ be defined by
$$
Z\xi(p,q;r)=\xi\left(p,q-r\sum_{i,k=1}^n J_{ik}p_i\mathbf{q}_k;r\right).
$$
Then we have, for $g\in C_0(G_2)$ and $f\in C_0(G_1)$,
$$
Z(\lambda_g\otimes1)Z^*=(\lambda\otimes L)\bigl(\alpha(g)\bigr),
\qquad Z(1\otimes L_f)Z^*=(\lambda\otimes L)\bigl(\gamma(f)\bigr).
$$
\end{prop}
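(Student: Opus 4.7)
The plan is to prove Proposition \ref{Z} by direct computation, since $Z$ is given by an explicit change of variables and each side of the two identities turns out to be a multiplication operator on $L^2(G_2 \times G_1)$.

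First I would check that $Z$ is unitary and write down its adjoint. The underlying map $T(p,q;r) = \bigl(p,\, q - r\sum_{i,k=1}^n J_{ik} p_i \mathbf{q}_k;\, r\bigr)$ is a shear in the $q$ direction depending polynomially on $(p,r)$; its Jacobian is triangular with ones on the diagonal, so $T$ preserves the Lebesgue measure on $G$, and a standard change of variables yields
$$
Z^*\xi(p,q;r) = \xi\Bigl(p,\, q + r\sum_{i,k=1}^n J_{ik} p_i \mathbf{q}_k;\, r\Bigr).
$$

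Next I would identify the right-hand sides. Under the natural embedding $M\bigl(C_0(G_2)\otimes C_0(G_1)\bigr) \hookrightarrow C_b(G_2 \times G_1)$, the representation $\lambda \otimes L$ acts as pointwise multiplication on $L^2(G_2 \times G_1)$. Using the formulas for $\alpha$ and $\gamma$ displayed just above the proposition, the two target operators are multiplication by $(p,q;r) \mapsto g(\alpha_r(p,q))$ and by $(p,q;r) \mapsto f(r)$, respectively.

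The two identities then reduce to a bookkeeping exercise. For the $\gamma$ identity one applies $Z^*$, then $1 \otimes L_f$, then $Z$ to a vector $\xi$: the two opposite shears in the argument of $\xi$ cancel, while the scalar factor $f(r)$ is untouched because it depends only on $r$ and $T$ fixes the $r$-coordinate. For the $\alpha$ identity the same cancellation occurs inside $\xi$, but now the outer $Z$ also substitutes $q \mapsto q - r\sum_{i,k} J_{ik} p_i \mathbf{q}_k$ into the multiplier $g(p,q)$, producing precisely $g(\alpha_r(p,q))$. I expect no real obstacle here; once $Z^*$ is in hand each identity is a single change of variables. The only point to keep straight is that the shear $r\sum_{i,k} J_{ik} p_i \mathbf{q}_k$ depends only on $p$ and $r$ and not on $q$, which is exactly what makes the cancellation inside $\xi$ work and allows $Z$ to interact cleanly with the multiplier $\lambda_g \otimes 1$.
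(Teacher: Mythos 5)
Your proposal is correct and is exactly the computation the paper has in mind (the paper's own proof simply says the computations are straightforward): you correctly identify $Z^*$ as the inverse shear, note that the shear depends only on $(p,r)$ so the substitutions cancel inside $\xi$, and recover the multipliers $g(\alpha_r(p,q))$ and $f(r)$.
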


\begin{proof}
The computations are straightforward.
\end{proof}

\begin{rem}
By using the operator realizations $g=\lambda_g$ and $f=L_f$, as well as 
$\alpha(g)=(\lambda\otimes L)\bigl(\alpha(g)\bigr)$ and $\gamma(f)
=(\lambda\otimes L)\bigl(\gamma(f)\bigr)$, we may simply write the above 
result as: $\alpha(g)=Z(g\otimes1)Z^*$ and $\gamma(f)=Z(1\otimes f)Z^*$. 
\end{rem}

As indicated above, it is more convenient to work with the $(x,y;r)$ variables.  So from now on, 
consider the Hilbert space ${\mathcal H}:=L^2(H/Z\times G_1)$, consisting of the $L^2$-functions 
in the $(x,y;r)$ variables.  Since we know, by the Fourier transform, ${\mathcal F}:L^2(G_2)
\cong L^2(H/Z)$, that $C_0(G_2)\cong C^*(H/Z)$, we may as well regard the coactions $\alpha$ and 
$\gamma$ to be on $C^*(H/Z)$ and $C_0(G_1)$ (In that case, the definitions of $\alpha$ and $\gamma$ 
should be modified accordingly.).  The operator $Z\in{\mathcal B}\bigl(L^2(G_2\times G_1)\bigr)$ 
of Proposition~\ref{Z} then becomes:
\begin{align}
&({\mathcal F}^{-1}\otimes1)Z({\mathcal F}\otimes1)\xi(x,y;r)  \notag \\
&=\int\xi(\tilde{x},\tilde{y};r)\bar{e}\left[p\cdot\tilde{x}
+\left(q-r\sum_{i,k}J_{ik}p_i\mathbf{q}_k\right)\cdot\tilde{y}\right]
e[p\cdot x+q\cdot y]\,d\tilde{x}d\tilde{y}dpdq  \notag \\
&=\int\xi(\tilde{x},\tilde{y};r)\bar{e}\left[p\cdot\left(\tilde{x}-x-r\sum_{i,k}J_{ik}\tilde{y}_k
\mathbf{x}_i\right)\right]\bar{e}\big[q\cdot(\tilde{y}-y)\bigr]\,d\tilde{x}d\tilde{y}dpdq  \notag \\
&=\xi\left(x+r\sum_{i,k=1}^nJ_{ik}y_k\mathbf{x}_i,y;r\right).
\notag 
\end{align}
Here, in the second equality, we used the fact that $\left(r\sum_{i,k}J_{ik}p_i\mathbf{q}_k\right)
\cdot\tilde{y}=r\sum_{i,k}J_{ik}p_i\tilde{y}_k=p\cdot\left(r\sum_{i,k}J_{ik}\mathbf{x}_i\tilde{y}_k\right)$. 
And, in the last equality, the Fourier inversion theorem was used.  From now on, for convenience, 
we will regard the operator $Z\in{\mathcal B}({\mathcal H})$ to mean the operator $({\mathcal F}^{-1}
\otimes1)Z({\mathcal F}\otimes1)$ above.

As indicated in Section 8 of \cite{BS}, the matched pair, $(G_1,G_2)$ together with the actions 
$\alpha$ and $\gamma$, determines a multiplicative unitary operator.  This is shown in part (1) 
of the following proposition.  However, this only comes from the group structure on $G$, and 
not its Poisson structure.  So it will not suffice for our purposes.  In our case, we actually 
need to go a little further, and introduce a certain cocycle term $\Theta$.  The definition of 
$\Theta$ comes directly from the Poisson bracket, given in Proposition~\ref{PoissonLiegroupG} 
(see also Proposition~\ref{groupcocycle}).  Our multiplicative unitary operator, incorporating 
both the matched pair and the cocycle, is obtained in part (2) of the following proposition. 
Proposition~\ref{V_Theta} below is none other than Proposition~3.12 in \cite{BJKjgp}.

\begin{prop}\label{V_Theta}
\begin{enumerate}
  \item Define the unitary operator $V\in{\mathcal B}({\mathcal H}\otimes{\mathcal H})
={\mathcal B}\bigl(L^2(H/Z\times G_1\times H/Z\times G_1)\bigr)$, by 
$V=(Z_{12}X_{24}Z^*_{12})Y_{13}$, using the standard leg notation.  It is multiplicative, 
and it determines the two $C^*$-algebras:
$$
A_V\cong C_0(G_1)\rtimes_{\gamma}(H/Z)\quad{\text {and }}\quad
\hat{A}_V\cong C_0(H/Z)\rtimes_{\alpha}G_1.
$$
They are actually (mutually dual) $C^*$-bialgebras, whose comultiplications are given by 
$\Delta_V(a)=V(a\otimes1)V^*$ for $a\in A_V$, and $\hat{\Delta}_V(b)=V^*(1\otimes b)V$ 
for $b\in\hat{A}_V$.
  \item Let $\Theta(x,y,r;x',y',r'):=\bar{e}\bigl[r'\beta(x,y')\bigr]\bar{e}\left[\frac{{r'}^2}{2}
\sum_{i,k}J_{ik}y_ky'_i\right]$, considered as a unitary operator contained in ${\mathcal B}
({\mathcal H}\otimes{\mathcal H})$.  Then the function $\Theta$ is a cocycle for $V$.  In this 
way, we obtain a multiplicative unitary operator $V_{\Theta}:=V\Theta\in{\mathcal B}({\mathcal H}
\otimes{\mathcal H})$.
Specifically,
\begin{align}
&V_{\Theta}\xi(x,y,r;x',y',r')  \notag \\
&=e\left[\frac{{r'}^2}{2}\sum_{i,k}J_{ik}y_k(y'_i-y_i)\right]\bar{e}\bigl[r'\beta(x,y'-y)\bigr] 
\notag \\
&\qquad\xi\left(x-r'\sum_{i,k}J_{ik}y_k\mathbf{x}_i,y,r+r';
x'-x+r'\sum_{i,k}J_{ik}y_k\mathbf{x}_i,y'-y,r'\right).
\notag
\end{align}
\end{enumerate}
The $C^*$-bialgebras associated with $V_{\Theta}$ are:
$$
S\cong C_0(G_1)\rtimes_{\gamma}^{\sigma}(H/Z),\quad{\text {and }}\quad
\hat{S}\cong C_0(H/Z)\rtimes_{\alpha} G_1,
$$
together with the comultiplications $\Delta(a):=V_{\Theta}(a\otimes1){V_{\Theta}}^*$ for 
$a\in S$, and $\hat{\Delta}(b):={V_{\Theta}}^*(1\otimes b)V_{\Theta}$ for $b\in\hat{S}$. 
Here, $\sigma:r\mapsto\sigma^r$ is a continuous field of cocycles such that 
$\sigma^r\bigl((x,y),(x',y')\bigr)=\bar{e}\left[\frac{{r}^2}{2}\sum_{i,k}J_{ik}y_ky'_i\right]
\bar{e}\bigl[r\beta(x,y')\bigr]$.
\end{prop}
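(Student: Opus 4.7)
The plan is to establish the two parts of the proposition in order, drawing throughout on the results already assembled (the matched pair in Definition~\ref{matchedpair}, the operator $Z$ in Proposition~\ref{Z}, and the group cocycle $\sigma^r$ in Proposition~\ref{groupcocycle}).

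For part (1), the multiplicativity of $V=(Z_{12}X_{24}Z_{12}^*)Y_{13}$ is a direct instance of the Baaj--Skandalis construction for a matched pair of locally compact groups (Section~8 of \cite{BS}). I would verify the pentagon $V_{12}V_{13}V_{23}=V_{23}V_{12}$ by first observing that $X$ and $Y$ are separately multiplicative (they are the Kac--Takesaki operators of the abelian groups $G_1$ and $G_2$) and then using the intertwining identities for $Z$ established in Proposition~\ref{Z}, which encode the matched-pair compatibility of $\alpha$ and $\gamma$; the fact that $\gamma$ is trivial makes several commutation steps automatic. The associated $C^*$-algebras are then identified by computing the left and right slices of $V$: the combined action of $X$, $Y$, and $Z$ naturally produces the generators of $C_0(G_1)\rtimes_{\gamma}(H/Z)$ on one side and $C_0(H/Z)\rtimes_{\alpha}G_1$ on the other. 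The comultiplications $\Delta_V$ and $\hat\Delta_V$ are read off directly from the standard multiplicative-unitary recipe.

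For part (2), I would first check that the phase function $\Theta$ is a cocycle for $V$ in the sense of twisted multiplicative unitaries, i.e. that $V\Theta$ satisfies its own pentagon equation. Substituting the explicit formulas, the pentagon for $V_{\Theta}$ reduces (after applying the pentagon for $V$) to a 2-cocycle identity of the form
\[
\Theta_{12,3}\,\Theta_{12}\;=\;\Theta_{1,23}\,\bigl(V_{23}\,\Theta_{13}\,V_{23}^{*}\bigr),
\]
which, when unwound fibrewise over $r\in G_1$, is exactly the 2-cocycle identity for $\sigma^r$ on $H/Z$ already verified in Proposition~\ref{groupcocycle}. The stated explicit formula for $V_{\Theta}\xi$ then follows by a direct computation: apply $\Theta$ (a pure phase) and then $V$, using the explicit formula for $Z$ on $L^2(H/Z\times G_1)$ derived just before the statement, together with the actions of $X$ and $Y$ on the appropriate legs. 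Finally, the $C^*$-bialgebra identifications $S\cong C_0(G_1)\rtimes_{\gamma}^{\sigma}(H/Z)$ and $\hat{S}\cong C_0(H/Z)\rtimes_{\alpha}G_1$ follow by slicing $V_{\Theta}$: the phase $\Theta$ twists the convolution on the $H/Z$-leg of $S$ by exactly the cocycle $\sigma^r$, while on the dual side $\hat S$ it affects only the comultiplication and not the algebra structure, so $\hat S$ coincides as a $C^*$-algebra with $\hat A_V$.

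The main obstacle is the cocycle verification for $\Theta$ with respect to $V$, since the identity interlaces the matched-pair action (carried by the $Z$'s inside $V$) with the scalar phase $\Theta$. Showing that the resulting identity collapses to the group cocycle identity for $\sigma^r$ requires careful tracking of how the action $\alpha_r$ on the $y$-variables modifies the phase; the quadratic term $\tfrac{r^2}{2}\sum_{i,k}J_{ik}y_ky'_i$ is precisely what is needed to absorb the discrepancy. Once this is in place, the remaining steps are Fourier-analytic bookkeeping, and the passage from $V_{\Theta}$ to the two $C^*$-bialgebras is routine via the standard duality between multiplicative unitaries and $C^*$-bialgebras.
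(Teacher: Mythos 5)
Your proposal is correct and follows essentially the same route as the paper: part (1) is the Baaj--Skandalis matched-pair construction with $X$, $Y$ encoding the groups and $Z$ the actions, part (2) rests on verifying the pentagon for $V_{\Theta}=V\Theta$ directly, and the algebras are identified by slicing as (twisted) crossed products. The only difference is one of detail: where the paper declares the pentagon verification ``straightforward'' and defers the twisted-crossed-product identification to Proposition~3.12 of \cite{BJKjgp}, you organize the same check as a reduction to the group $2$-cocycle identity for $\sigma^r$ from Proposition~\ref{groupcocycle}, which is a reasonable (and slightly more informative) way to carry out the computation the paper omits.
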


\begin{proof}
(1). The choice of the operator $V$, arising from the matched pair $(G_1,G_2)$, is suggested by 
Section 8 of \cite{BS}.  As noted above, the operators $X$ and $Y$ encode the groups $G_1$ and $G_2$, 
while the actions $\alpha$ and $\gamma$ are encoded by the operator $Z$.  The multiplicativity 
of $V$ is just a simple consequence of the fact that $G$ is a group.  From the general theory 
of multiplicative unitary operators \cite{BS}, we thus obtain the (mutually dual) $C^*$-bialgebras 
$A_V$ and $\hat{A}_V$ by considering the ``left [and right] slices'' of $V$.  The proof for the 
characterizations of the two $C^*$-algebras is also straightforward, and will be skipped.

(2). The function $\Theta$ is a cocycle for $V$, since $V_{\Theta}$ is also multiplicative.  The 
verification of the pentagon equation, $W_{12}W_{13}W_{23}=W_{23}W_{12}$ for $W=V_{\Theta}$, is 
straightforward.

As usual, the $C^*$-bialgebras associated with $V_{\Theta}$ are obtained by
\begin{align}
S&=\bigl\{(\omega\otimes\operatorname{id}_{\mathcal H})
(V_{\Theta}):\omega\in{\mathcal B}({\mathcal H})_*\bigr\}
\bigl(\subseteq{\mathcal B}({\mathcal H})\bigr),  \notag \\
\hat{S}&=\bigl\{(\operatorname{id}_{\mathcal H}\otimes\omega)
(V_{\Theta}):\omega\in{\mathcal B}({\mathcal H})_*\bigr\}
\bigl(\subseteq{\mathcal B}({\mathcal H})\bigr).  \notag
\end{align}
Their comultiplications are defined in the standard way, via the multiplicative unitary operator.
For the verification of the $C^*$-algebra realizations of $S$ and $\hat{S}$ as twisted crossed 
product $C^*$-algebras above, refer to the proof of Proposition~3.12 of \cite{BJKjgp}.  Since the 
groups $G_1$ and $H/Z$ are amenable (being abelian), the notions of the reduced and full (twisted) 
crossed products coincide.
\end{proof}

Observe that the cocycle term for the twisted crossed product $C^*$-algebra follows directly 
from the underlying Poisson structure.  In fact, the $C^*$-bialgebra $(S,\Delta)$ is essentially 
a ``quantized $C^*(H)$'' or a ``quantized $C_0(G)$''.  For instance, if $J\equiv0$, then it is 
not difficult to show that $S\cong C^*(H)$ as an algebra.  In addition, see Section 3.3 below 
for the clarification that $(S,\Delta)$ is indeed a deformation quantization of $C_0(G)$, in 
the direction of its Poisson bracket.

\subsection{$(S,\Delta)$ as a deformation quantization of $\bigl(G,\{\ ,\ \}\bigr)$}

We constructed above a $C^*$-bialgebra $(S,\Delta)$, by means of the multiplicative unitary 
operator $V_{\Theta}$.  There are strong indications that $(S,\Delta)$ should be an appropriate 
quantum counterpart to the Poisson--Lie group $\bigl(G,\{\ ,\ \}\bigr)$.  In this subsection, 
we make this picture clearer, by showing that the $C^*$-algebra $S$ is a (strict) deformation 
quantization of $C_0(G)$, in the sense of Rieffel \cite{Rf1}, \cite{Rf4}.

Let us analyze the $C^*$-algebra $S$ a bit.  For $f\in C_c(G)$, we can carry it into 
a function of the $(x,y,r)$ variables by the (partial) Fourier transform: $f\mapsto 
f^{\vee}\in C_0(H/Z\times G_1)$, where 
$f^{\vee}(x,y,r)=\int f(p,q,r)e[p\cdot x+q\cdot y]\,dpdq$.
Considering this, let us define the operator $L_f\in{\mathcal B}({\mathcal H})$ by 
\begin{equation}\label{(Lrep)}
L_f\xi(x,y,r):=\int f^{\vee}(\tilde{x},\tilde{y},r)
\sigma^r\bigl((\tilde{x},\tilde{y}),(x-\tilde{x},y-\tilde{y})\bigr)
\xi(x-\tilde{x},y-\tilde{y},r)\,d\tilde{x}d\tilde{y},
\end{equation}
where $\sigma$ is the cocycle given in Proposition~\ref{V_Theta}\,(2). 

\begin{rem}
If $\sigma\equiv1$, the above representation $L:C_c(G)\ni f\mapsto L_f
\in{\mathcal B}({\mathcal H})$ is equivalent (by the partial Fourier transform) to 
$\lambda\otimes L:C_c(G_2\times G_1)\mapsto{\mathcal B}\bigl(L^2(G_2\times G_1)\bigr)
={\mathcal B}\bigl(L^2(G)\bigr)$, with the representations $L$ and $\lambda$ on 
$C_0(G_1)$ and $C_0(G_2)$ defined earlier.  See also Theorem~\ref{DQtheorem}\,(1). 
Since there is no worry about confusion, we chose to use the same name $L$ for our 
(extended) representation.
\end{rem}

By the result of Proposition~\ref{V_Theta}\,(2), it is clear that $S\cong\overline
{L\bigl(C_c(G)\bigr)}^{\|\ \|}$, as a $C^*$-algebra.
What all this means is that we do have a (deformed) ${}^*$-algebra structure at the level of 
the functions on $G$, inherited from the ${}^*$-algebra structure on $S$.  To be more precise, 
let ${\mathcal A}={\mathcal S}_{3c}(G)\,\bigl(\subseteq C_0(G)\bigr)$, the space of Schwartz 
functions having compact support in the $r$-variable.  It is slightly larger than $C_c^{\infty}(G)$, 
and is the image under the partial Fourier transform, ${}^\wedge$, of the space ${\mathcal S}_{3c}
(H/Z\times G_1)\,\bigl(\subseteq C_0(H/Z\times G_1)\bigr)$.  On ${\mathcal A}$, we can define 
the deformed product, $\times$, by 
\begin{align}\label{(deformedproduct)}
(f\times g)(p,q,r)&=(f^{\vee}\ast_{\sigma}g^{\vee})^{\wedge}(p,q,r)  \notag \\
&=\int\bar{e}[p\cdot x+q\cdot y]f^{\vee}(\tilde{x},\tilde{y},r)g^{\vee}(x-\tilde{x},y-\tilde{y},r) \\
&\qquad\bar{e}\left[\frac{{r}^2}{2}\sum_{i,k}J_{ik}\tilde{y}_k(y_i-\tilde{y}_i)\right]
\bar{e}\bigl[r\beta(\tilde{x},y-\tilde{y})\bigr]\,d\tilde{x}d\tilde{y}dxdy.
\notag
\end{align}
Using the definitions of $f^{\vee}$ and $g^{\vee}$, together with the Fourier inversion theorem, 
this expression becomes:
\begin{equation}\label{(deformedproductpqr)}
(f\times g)(p,q,r)=\int\bar{e}\bigl[(q-\tilde{q})\cdot y\bigr]
f\left(p+ry,q+\frac{r^2}{2}\sum_{i,k}J_{ik}y_i\mathbf{q}_k,r\right)g(p,\tilde{q},r)\,d\tilde{q}dy.
\end{equation}
Similarly, the involution on ${\mathcal A}$ is given by
\begin{align}\label{(involution)}
&f^*(p,q,r)=\bigl((f^{\vee})^*\bigr)^{\wedge}(p,q,r)   \notag \\
&=\int\overline{f(\tilde{p},\tilde{q},r)}\bar{e}\bigl[(p-\tilde{p})\cdot x+(q-\tilde{q})\cdot y\bigr]
\bar{e}\left[\frac{{r}^2}{2}\sum_{i,k}J_{ik}y_iy_k\right]
\bar{e}\bigl[r\beta(x,y)\bigr]\,d\tilde{p}d\tilde{q}dxdy.
\end{align}
Clearly, the ${}^*$-algebra $({\mathcal A},\times,{}^*)$ is a pre-$C^*$-algebra, together with 
the $C^*$-norm $f\mapsto\|L_f\|$.  Here, the representation $L$ is just as in 
equation~\eqref{(Lrep)}, having been extended to ${\mathcal A}$.  By construction, 
we know that $S\cong\overline{L({\mathcal A})}^{\|\ \|}$.

To show that the $C^*$-algebra $S$ is a deformation of $\bigl(C_0(G),\{\ ,\ \}\bigr)$, let us now 
introduce the deformation parameter $\hbar$.  We will follow the general procedure given in 
Theorem~3.4 of \cite{BJKp1}.  In our case, with the group $H/Z$ being abelian, it does not 
need to vary and we only need to incorporate the parameter $\hbar$ to the cocycle $\sigma$. 
Namely, consider the cocycle $\sigma_{\hbar}:r\mapsto\sigma_{\hbar}^r$, given by
$$
\sigma_{\hbar}^r\bigl((x,y),(x',y')\bigr)=\bar{e}\left[\frac{\hbar{r}^2}{2}\sum_{i,k}J_{ik}y_ky'_i\right]
\bar{e}\bigl[\hbar r\beta(x,y')\bigr].
$$
Then in exactly the same way as in equations~\eqref{(deformedproductpqr)} and \eqref{(involution)}, 
but by using the cocycle $\sigma_{\hbar}$ instead, we can construct on the function space ${\mathcal A}$ 
the deformed multiplication $\times_{\hbar}$ and the involution ${}^{*_{\hbar}}$.  As before, 
each $\bigl({\mathcal A},\times_{\hbar},{}^{*_{\hbar}}\bigr)$ is a pre-$C^*$-algebra: Similarly 
to equation~\eqref{(Lrep)}, the functions $f\in{\mathcal A}$ can be regarded as operators, with 
the operator norm now denoted by $\|\ \|_{\hbar}$.  Let us define $S_{\hbar}$ as the $C^*$-completion 
of $\bigl({\mathcal A},\times_{\hbar},{}^{*_{\hbar}}\bigr)$, under $\|\ \|_{\hbar}$.  Using these 
ingredients, we can now describe the deformation quantization picture.

\begin{theorem}\label{DQtheorem}
Recall the Poisson bracket $\{\ ,\ \}$ on $G$, from Proposition~\ref{PoissonLiegroupG}. Let ${\mathcal A}
={\mathcal S}_{3c}(G)$ be the (dense) subspace of $C_0(G)$ as defined above.  For each $\hbar\in\mathbb{R}$, 
define on ${\mathcal A}$ the deformed multiplication, $\times_{\hbar}$, and the involution, ${}^{*_{\hbar}}$, 
as in the previous paragraph, together with the corresponding $C^*$-norm $\|\ \|_{\hbar}$.  Then we have:
\begin{enumerate}
  \item For $\hbar=0$, the operations $\times_{\hbar}$, ${}^{*_{\hbar}}$ are exactly the pointwise product 
and the complex conjugation on ${\mathcal A}\,\bigl(\subseteq C_0(G)\bigr)$.  Also $S_{\hbar=0}\cong C_0(G)$, 
as a $C^*$-algebra.
  \item The $C^*$-algebras $\{S_{\hbar}\}_{\hbar\in\mathbb{R}}$ form a continuous field of $C^*$-algebras. 
In particular, the map $\hbar\mapsto\|f\|_{\hbar}$ is continuous for any $f\in{\mathcal A}$.
  \item For any $f,g\in{\mathcal A}$ and $(p,q,r)\in G$, we have the following pointwise convergence:
$$
\frac{1}{\hbar}(f\times_{\hbar}g-g\times_{\hbar}f)(p,q,r)\,\longrightarrow\,\frac{i}{2\pi}\{f,g\}(p,q,r),
$$
as $\hbar\to0$.
  \item The convergence in (3) is actually stronger.  In fact, for $f,g\in{\mathcal A}$, we have:
\begin{equation}\label{(correspondence)}
\lim_{\hbar\to0}\left\|\frac{f\times_{\hbar}g-g\times_{\hbar}f}{i\hbar}-\frac{1}{2\pi}\{f,g\}\right\|_{\hbar}=0.
\end{equation}
\end{enumerate}
All this means that the ${}^*$-algebras $\bigl({\mathcal A},\times_{\hbar},{}^{*_{\hbar}}\bigr)_{\hbar\in\mathbb{R}}$ 
provide a ``strict deformation quantization'' (in the sense of Rieffel \cite{Rf1}, \cite{Rf4}) of 
${\mathcal A}\,\bigl(\subseteq C_0(G)\bigr)$, in the direction of the Poisson bracket $(1/2\pi)\{\ ,\ \}$.
\end{theorem}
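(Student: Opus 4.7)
Part (1) is a direct Fourier-inversion calculation. At $\hbar=0$ the cocycle $\sigma_0^r$ is identically $1$, so \eqref{(deformedproductpqr)} reduces to
\[
(f\times_0 g)(p,q,r)=\int \bar{e}[(q-\tilde q)\cdot y]\,f(p,q,r)\,g(p,\tilde q,r)\,d\tilde q\,dy=f(p,q,r)g(p,q,r)
\]
via $\int\bar{e}[(q-\tilde q)\cdot y]\,dy=\delta(q-\tilde q)$; an analogous double-Fourier computation in \eqref{(involution)} identifies $f^{*_0}$ with $\bar f$. The representation $L$ of \eqref{(Lrep)} becomes, at $\hbar=0$, convolution by $f^{\vee}(\cdot,\cdot,r)$ in the $(x,y)$ variables for each fixed $r$, which after a partial Fourier transform is multiplication by $f$; hence $\|f\|_0=\|f\|_{\infty}$ and $S_0\cong C_0(G)$ by density of $\mathcal{A}=\mathcal{S}_{3c}(G)$.

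For part (2), I would realize each $S_\hbar$ as the twisted crossed product $C_0(G_1)\rtimes_{\gamma}^{\sigma_\hbar}(H/Z)$ of Proposition~\ref{V_Theta}(2), and invoke the standard Packer--Raeburn/Rieffel result that a continuously varying family of $\mathbb{T}$-valued $2$-cocycles on a fixed amenable group yields a continuous field of twisted crossed products. Since the phases $\hbar r\beta(x,y')$ and $\tfrac{\hbar r^2}{2}\sum_{i,k}J_{ik}y_k y'_i$ are jointly continuous in $\hbar$ and in $(x,y,x',y';r)$, this applies directly, giving both the continuous-field structure and the continuity of $\hbar\mapsto\|f\|_\hbar$.

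For parts (3) and (4), I would start from the $\hbar$-analogue of \eqref{(deformedproductpqr)},
\[
(f\times_\hbar g)(p,q,r)=\int\bar{e}[(q-\tilde q)\cdot y]\,f\!\left(p+\hbar ry,\,q+\tfrac{\hbar r^2}{2}\textstyle\sum_{i,k}J_{ik}y_i\mathbf{q}_k,\,r\right)g(p,\tilde q,r)\,d\tilde q\,dy,
\]
and Taylor-expand $f$ in $\hbar$ about $(p,q,r)$. The order-zero term gives $fg$ and cancels from $f\times_\hbar g-g\times_\hbar f$. The first-order term produces factors of $y_j$ under the integral; using the identity
\[
\int y_j\,\bar{e}[(q-\tilde q)\cdot y]\,g(p,\tilde q,r)\,d\tilde q\,dy=\tfrac{i}{2\pi}\,\partial_{q_j}g(p,q,r)
\]
(obtained from integration by parts in $\tilde q$ followed by Fourier inversion in $y$), the surviving $\hbar^1$-coefficient matches $\tfrac{i}{2\pi}$ times the Poisson bracket of Proposition~\ref{PoissonLiegroupG} term by term, giving (3). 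For (4), I would write the Taylor remainder in integral form and realize $\frac{1}{i\hbar}(f\times_\hbar g-g\times_\hbar f)-\frac{1}{2\pi}\{f,g\}$ itself as an operator $L_{R_\hbar}$ in $S_\hbar$, then bound $\|L_{R_\hbar}\|_\hbar$ by the $L^1$-norm in $(x,y)$ of its Schwartz kernel. The main obstacle lies precisely in this norm estimate: one must exploit the Schwartz decay in $(x,y)$ afforded by $f,g\in\mathcal{S}_{3c}(G)$ together with the compact support in $r$ to show that the kernel has $L^1$-norm of order $\hbar$ uniformly in $\hbar$, thereby establishing \eqref{(correspondence)}.
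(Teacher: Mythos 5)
Your proposal is correct and follows essentially the same route as the paper: part (1) by Fourier inversion with $\sigma_0\equiv1$, part (2) by realizing $S_{\hbar}\cong C_0(G_1)\rtimes^{\sigma_{\hbar}}(H/Z)$ and invoking Rieffel's continuity result for twisted crossed products of a fixed amenable group, and part (4) by dominating $\|\cdot\|_{\hbar}$ by an $L^1$-norm and using the Schwartz decay together with the compact support in $r$. The only (cosmetic) difference is in part (3), where you Taylor-expand the shifted argument of $f$ in the $(p,q,r)$-form of the product and convert moments of $y$ into $q$-derivatives of $g$, whereas the paper expands the cocycle phase $\bar{e}[\hbar(\cdots)]=1-2\pi i\hbar(\cdots)+\cdots$ under the double Fourier integral in the $(x,y)$ variables; the two computations are equivalent via Fourier inversion and yield the same first-order coefficient $\frac{i}{2\pi}\{f,g\}$.
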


\begin{proof}
This is a special case of Theorem~3.4 of \cite{BJKp1}.  But, we will still carry out some main aspects 
of the proof.

(1). If $\hbar=0$, the cocycle term becomes $\sigma\equiv1$, and can be ignored.  By the Fourier inversion 
theorem, equations~\eqref{(deformedproductpqr)} and \eqref{(involution)} thus become:
\begin{align}
(f\times g)(p,q,r)&=\int\bar{e}\bigl[(q-q')\cdot\tilde{y}\bigr]f(p,q',r)g(p,q,r)\,dq'd\tilde{y}
=f(p,q,r)g(p,q,r), \notag \\
f^*(p,q,r)&=\int\overline{f(\tilde{p},\tilde{q},r)}
\bar{e}\bigl[(p-\tilde{p})\cdot x+(q-\tilde{q})\cdot y\bigr]\,d\tilde{p}d\tilde{q}dxdy
=\overline{f(p,q,r)}.  \notag
\end{align}
It is also easy to see that $S_{\hbar=0}\cong C_0(G)$, with its sup-norm as the $C^*$-norm.  When $\hbar=1$, 
we would recover the $C^*$-algebra $S$ of Proposition~\ref{V_Theta}\,(2).

(2). As for the $C^*$-algebras $\{S_{\hbar}\}_{\hbar\in\mathbb{R}}$ forming a continuous field of 
$C^*$-algebras, note that each $S_{\hbar}$ is really a twisted crossed product $C^*$-algebra of 
an abelian group $H/Z$, namely $S_{\hbar}\cong C_0(G_1)\rtimes^{\sigma_{\hbar}}(H/Z)$, and only the 
cocycle $\sigma_{\hbar}$ is being changed as the parameter $\hbar$ varies.  Therefore, for each $\hbar$, 
the ``amenability condition'' holds, meaning that the notions of the ``full'' and the ``reduced'' crossed 
product $C^*$-algebras coincide.  In \cite{RfC*field}, using the universal property of the full 
$C^*$-algebras and also taking advantage of the property of the reduced $C^*$-algebras that one is 
able to work with their specific representations, Rieffel gave an answer to the problem of the continuity 
of certain field of crossed product $C^*$-algebras: In short, under suitable conditions, Rieffel has 
shown that the field of ``full'' crossed product $C^*$-algebras is upper semi-continuous, while the 
field of ``reduced'' crossed product $C^*$-algebras is lower semi-continuous.  Our case is simpler 
than the general case, and with the amenability at hand, it follows that our field of $C^*$-algebras 
$\{S_{\hbar}\}_{\hbar\in\mathbb{R}}$ is in fact continuous.

(3). For $f\in{\mathcal A}$, by Fourier inversion theorem, we can write it as
$$
f(p,q,r)=\int({\mathcal F}^{-1}f)(\tilde{x},\tilde{y},\tilde{z})\bar{e}[p\cdot\tilde{x}
+q\cdot\tilde{y}+r\tilde{z}]\,d\tilde{x}d\tilde{y}d\tilde{z}.
$$
Since $\bar{e}[t]=e^{-2\pi it}$, we thus have:
$$
df(p,q,r)=(-2\pi i)\int({\mathcal F}^{-1}f)(\tilde{x},\tilde{y},\tilde{z})
\bar{e}[p\cdot\tilde{x}+q\cdot\tilde{y}+r\tilde{z}]\mathbf{X}\,d\tilde{x}d\tilde{y}d\tilde{z},
$$
where $\mathbf{X}=(\tilde{x},\tilde{y},\tilde{z})$.  Therefore, for $f,g\in{\mathcal A}$, the Poisson 
bracket from Proposition~\ref{PoissonLiegroupG} becomes:
\begin{align}
&\{f,g\}(p,q,r)=(-4\pi^2)\int({\mathcal F}^{-1}f)(\tilde{x},\tilde{y},\tilde{z})
({\mathcal F}^{-1}g)(\tilde{\tilde{x}},\tilde{\tilde{y}},\tilde{\tilde{z}})
\notag \\
&\qquad\qquad\qquad\qquad\qquad\left[r\bigl(\beta(\tilde{x},\tilde{\tilde{y}})
-\beta(\tilde{\tilde{x}},\tilde{y})\bigr)+\frac{r^2}{2}\sum_{i,k=1}^nJ_{ik}(\tilde{y}_k\tilde{\tilde{y}}_i
-\tilde{y}_i\tilde{\tilde{y}}_k)\right]  \notag \\
&\qquad\qquad\qquad\qquad\qquad\bar{e}\bigl[p\cdot(\tilde{x}+\tilde{\tilde{x}})
+q\cdot(\tilde{y}+\tilde{\tilde{y}})+r(\tilde{z}+\tilde{\tilde{z}})]\,d\tilde{x}d\tilde{y}d\tilde{z}
d\tilde{\tilde{x}}d\tilde{\tilde{y}}d\tilde{\tilde{z}}.
\notag
\end{align}
In the $(x,y,r)$ variables, by using the partial Fourier transform, this can be re-written as
\begin{align}\label{(pb)}
&\{f,g\}(p,q,r)  \notag \\
&=(-4\pi^2)\int f^{\vee}(\tilde{x},\tilde{y},r)g^{\vee}(\tilde{\tilde{x}},\tilde{\tilde{y}},r)
\bar{e}\bigl[p\cdot(\tilde{x}+\tilde{\tilde{x}})+q\cdot(\tilde{y}+\tilde{\tilde{y}})\bigr]  \\
&\qquad\qquad\ \ \left[r\bigl(\beta(\tilde{x},\tilde{\tilde{y}})-\beta(\tilde{\tilde{x}},\tilde{y})\bigr)
+\frac{r^2}{2}\sum_{i,k=1}^nJ_{ik}(\tilde{y}_k\tilde{\tilde{y}}_i-\tilde{y}_i\tilde{\tilde{y}}_k)
\right]\,d\tilde{x}d\tilde{y}d\tilde{\tilde{x}}d\tilde{\tilde{y}}.
\notag
\end{align}

Meanwhile, let us re-write the deformed product, $f\times_{\hbar}g$ for $f,g\in{\mathcal A}$, 
in a more symmetric form.  Basically, we start from the definition given in equation 
\eqref{(deformedproduct)}, together with the adjustment in the cocycle term incorporating 
the parameter $\hbar$.  Perform the change-of-variables: $x-\tilde{x}\mapsto\tilde{\tilde{x}}$ 
and $y-\tilde{y}\mapsto\tilde{\tilde{y}}$.  Then we would have:
\begin{align}
(f\times_{\hbar}g)(p,q,r)&=\int\bar{e}\bigl[p\cdot(\tilde{x}+\tilde{\tilde{x}})
+q\cdot(\tilde{y}+\tilde{\tilde{y}})\bigr]f^{\vee}(\tilde{x},\tilde{y},r)
g^{\vee}(\tilde{\tilde{x}},\tilde{\tilde{y}},r)  \notag \\
&\qquad\bar{e}\bigl[\hbar r\beta(\tilde{x},\tilde{\tilde{y}})\bigr]\bar{e}\left[\frac{\hbar r^2}{2}
\sum_{i,k}J_{ik}\tilde{y}_k\tilde{\tilde{y}}_i\right]\,d\tilde{x}d\tilde{y}d\tilde{\tilde{x}}d\tilde{\tilde{y}}.
\notag
\end{align}
It follows that we have:
\begin{align}\label{(fg-gf)}
&\frac{1}{\hbar}(f\times_{\hbar}g-g\times_{\hbar}f)(p,q,r)   \notag \\
&=\frac{1}{\hbar}\int\bar{e}\bigl[p\cdot(\tilde{x}+\tilde{\tilde{x}})+q\cdot(\tilde{y}+\tilde{\tilde{y}})\bigr]
f^{\vee}(\tilde{x},\tilde{y},r)g^{\vee}(\tilde{\tilde{x}},\tilde{\tilde{y}},r)   \notag \\
&\qquad\left(\bar{e}\bigl[\hbar r\beta(\tilde{x},\tilde{\tilde{y}})\bigr]\bar{e}\left[\frac{\hbar r^2}{2}
\sum_{i,k}J_{ik}\tilde{y}_k\tilde{\tilde{y}}_i\right]-\bar{e}\bigl[\hbar r\beta(\tilde{\tilde{x}},\tilde{y})\bigr]
\bar{e}\left[\frac{\hbar r^2}{2}\sum_{i,k}J_{ik}\tilde{\tilde{y}}_k\tilde{y}_i\right]\right)
\notag \\
&\qquad\quad d\tilde{x}d\tilde{y}d\tilde{\tilde{x}}d\tilde{\tilde{y}}.
\end{align}
In the above, since $\bar{e}[t]=e^{-2\pi i t}=1-2\pi it+\cdots$, we have:
\begin{align}
&\frac{1}{\hbar}\left(\bar{e}\bigl[\hbar r\beta(\tilde{x},\tilde{\tilde{y}})\bigr]\bar{e}\left[\frac{\hbar r^2}{2}
\sum_{i,k}J_{ik}\tilde{y}_k\tilde{\tilde{y}}_i\right]-\bar{e}\bigl[\hbar r\beta(\tilde{\tilde{x}},\tilde{y})\bigr]
\bar{e}\left[\frac{\hbar r^2}{2}\sum_{i,k}J_{ik}\tilde{\tilde{y}}_k\tilde{y}_i\right]\right)
\notag \\
&=(-2\pi i)\left(r\beta(\tilde{x},\tilde{\tilde{y}})+\frac{r^2}{2}\sum_{i,k}J_{ik}\tilde{y}_k\tilde{\tilde{y}}_i
-r\beta(\tilde{\tilde{x}},\tilde{y})-\frac{r^2}{2}\sum_{i,k}J_{ik}\tilde{\tilde{y}}_k\tilde{y}_i\right)
+{\mathcal O}(\hbar).
\notag
\end{align}
Therefore, comparing with equation \eqref{(pb)}, we can readily observe the pointwise convergence:
$$
\frac{1}{\hbar}(f\times_{\hbar}g-g\times_{\hbar}f)(p,q,r)\,\longrightarrow\,\frac{i}{2\pi}\{f,g\}(p,q,r),
$$
as $\hbar\to0$.

(4). In our case, each $S_{\hbar}$ (for $\hbar\ne0$) is isomorphic to the (reduced) twisted crossed 
product $C^*$-algebra $C_0(G_1)\rtimes^{\sigma_{\hbar}}(H/Z)$, and therefore, the $C^*$-norm $\|\ \|_{\hbar}$ 
is dominated by the $L^1$-norm on $L^1\bigl(H/Z,C_0(G_1)\bigr)$.  By the partial Fourier transform in 
the $r(\in G_1)$ variable, this $L^1$-norm is equivalent to the $L^1$-norm on $L^1(H/Z\times Z)=L^1(H)$. 
Even when $\hbar=0$, for which we know $S_{\hbar=0}\cong C_0(G)\cong C^*(H)$ by the Fourier transform, 
it holds that the $C^*$-norm $\|\ \|_{\hbar=0}$ is also dominated by the $L^1$-norm on $L^1(H)$.  All this 
means that to show the norm convergence in equation~\eqref{(correspondence)}, we just need to show the 
convergence with respect to the $L^1$-norm on $L^1(H)$, transferred to ${\mathcal A}\subseteq L^1(G)$ 
by the Fourier transform. 

This can be achieved by Lebesgue's dominated convergence theorem: We already know the pointwise convergence 
in ${\mathcal A}$; While in ${\mathcal A}={\mathcal S}_{3c}(G)$, we are able to find an $L^1$-bound for 
the expressions, $(f\times_{\hbar}g-g\times_{\hbar}f)/{\hbar}-(i/2\pi)\{f,g\}$, since the convergence 
involving the cocycle terms can be controlled in a compact set on which the convergence is uniform.
\end{proof}

\begin{rem}
In the proof of item (2) above, we were aided by the fact that $H/Z$ is abelian.  In general, however, 
the group may not be abelian and may also need to vary (as the parameter value changes) in the definition 
of the $C^*$-algebras $S_{\hbar}$.  This would make the proof of the continuity of $\{S_{\hbar}\}_{\hbar
\in\mathbb{R}}$ more difficult.  Our current example does not have this problem, but refer to the proof 
of Theorem~3.4 in \cite{BJKp1} for a more general situation. 

Meanwhile, as for the proof of the correspondence relation in equation~\eqref{(correspondence)}, 
note that a pointwise convergence result like (3) would be usually sufficient for most of the 
formal power series frameworks, like in the case of a QUE algebra.  But, for our ``strict deformation 
quantization'' framework, we further needed to show the norm convergence, as in (4) above.  See 
\cite{Rf1}, \cite{Rf4} for more general discussions.  The idea for proof of (4) was obtained from 
the one given in \cite{Rf3}, with a small adjustment of restricting things to ${\mathcal S}_{3c}(G)$, 
instead of the space ${\mathcal S}(G)$ of all Schwartz functions on $G$.
\end{rem}

\subsection{The comultiplication on $(S,\Delta)$}

To further strengthen our case that $(S,\Delta)$ is a ``quantized $C_0(G)$'', let us look at the 
comultiplication $\Delta$, which will show that it reflects the group multiplication law on $G$.

\begin{prop}
With the representation $f\mapsto L_f\in{\mathcal B}({\mathcal H})$, $f\in{\mathcal A}$, defined in 
equation~\eqref{(Lrep)}, the comultiplication $\Delta$ from Proposition~\ref{V_Theta} (2) becomes:
$$
\Delta(L_f)=(L\otimes L)_{\Delta f}, 
$$
where $\Delta f\in C_b(G\times G)$ is the function defined by
$$
\bigl(\Delta f\bigr)(p,q,r;p',q',r')=f\left(p+p',q+q'
+r'\sum_{i,k}J_{ik}p_i\mathbf{q}_k,r+r'\right).
$$
\end{prop}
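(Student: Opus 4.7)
The plan is a direct kernel-level computation. Since Proposition~\ref{V_Theta}(2) gives $\Delta(L_f)=V_{\Theta}(L_f\otimes 1){V_{\Theta}}^*$, with explicit integral formulas for $V_{\Theta}$ there and for $L_f$ in equation~\eqref{(Lrep)}, I would apply both operators to a generic vector $\xi\in\mathcal{H}\otimes\mathcal{H}\cong L^2((H/Z\times G_1)^2)$ and compare the resulting expressions.

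On the left-hand side, I apply ${V_{\Theta}}^*$ first, which produces a conjugate $\Theta$-phase together with the inverse of the coordinate substitution in Proposition~\ref{V_Theta}(2)---a substitution that couples the first-factor $(x,r)$ variables to the second-factor $(y,r')$ variables through the matrix $J$. Next I apply $L_f\otimes 1$, which performs twisted convolution against $f^{\vee}$ with cocycle $\sigma^{r}$ in the first-factor variables only. Finally I apply $V_{\Theta}$, which reverses the substitution and reintroduces a $\Theta$-phase. Renaming the convolution variable so as to absorb the substitution, the combined effect is an integral expression whose overall phase is a product of two $\Theta$-factors and $\sigma^{r}$ evaluated at arguments shifted by terms involving $r'$ and $Jy$.

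On the right-hand side, I first compute the partial Fourier transform of $\Delta f$ in the four variables $(p,q,p',q')$. Using the explicit formula for $\Delta f$ and substituting $P=p+p'$, $Q=q+q'+r'\sum_{i,k}J_{ik}p_i\mathbf{q}_k$, the $q$-integration produces $\delta(y-y')$, the $p$-integration produces $\delta\bigl(x-x'-r'\sum_{i,k}J_{ik}y'_k\mathbf{x}_i\bigr)$ (the nonlinearity in $Q$ being exactly responsible for the $J$-twist), and the remaining $(P,Q)$-integration yields $f^{\vee}(x',y',r+r')$. Plugging this distributional kernel into the representation $L\otimes L$---i.e.\ equation~\eqref{(Lrep)} applied on each tensor factor---produces a single integral expression involving $\sigma^{r}$ on the first tensor leg, $\sigma^{r'}$ on the second, and variable shifts dictated by the two delta functions.

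Matching the two sides reduces to an algebraic identity among the cocycles $\sigma^{r}$, $\sigma^{r'}$ and $\Theta$: namely, that after the appropriate variable substitutions, $\sigma^{r+r'}$ (arising on the left from the shifted parameter in $L_f$) equals the product $\sigma^{r}\sigma^{r'}\Theta$ (arising on the right). This is essentially the cocycle condition making $V_{\Theta}=V\Theta$ multiplicative, already verified in Proposition~\ref{V_Theta}(2). The main obstacle is purely organizational---keeping track of many variables, shifts, and phase factors and performing the right changes of variable. The conceptual reason the identity holds is built into the construction: $V_{\Theta}$ was designed so that the cocycle $\Theta$ perturbs $V$ by precisely the amount needed for the induced comultiplication $\Delta$ on $S$ to represent the group multiplication of $G$ at the level of operator-valued functions, which is exactly the content of the formula $\Delta(L_f)=(L\otimes L)_{\Delta f}$.
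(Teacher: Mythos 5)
Your proposal is correct and follows essentially the same route as the paper: both sides of $\Delta(L_f)=V_{\Theta}(L_f\otimes1){V_{\Theta}}^*$ versus $(L\otimes L)_{\Delta f}$ are computed explicitly on vectors and matched, with the $J$-twist in the second argument of $f$ producing exactly the shifted delta/substitution you describe and the phases closing up via the $\sigma^{r}$, $\sigma^{r'}$, $\Theta$ identity. The only organizational difference is that the paper first verifies the identity on the character building blocks $L_{\tilde{x},\tilde{y},\tilde{z}}$ (whose kernels are precisely your delta functions) and then extends by linearity, rather than carrying the distributional kernel of $(\Delta f)^{\vee}$ through the whole computation.
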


\begin{proof}
Write $L_f=\int({\mathcal F}^{-1}f)(\tilde{x},\tilde{y},\tilde{z})
L_{\tilde{x},\tilde{y},\tilde{z}}\,d\tilde{x}d\tilde{y}d\tilde{z}$, where ${\mathcal F}^{-1}f\in C_c(H)$ 
is the (inverse) Fourier transform of $f$.  Then $L_{\tilde{x},\tilde{y},\tilde{z}}\in{\mathcal B}({\mathcal H})$ 
is such that
$$
L_{\tilde{x},\tilde{y},\tilde{z}}\xi(x,y,r)=\bar{e}[r\tilde{z}]
\sigma^r\bigl((\tilde{x},\tilde{y}),(x-\tilde{x},y-\tilde{y})\bigr)\xi(x-\tilde{x},y-\tilde{y},r).
$$
Comparing with the definition of $L_f$ in equation~\eqref{(Lrep)}, we may regard 
$L_{\tilde{x},\tilde{y},\tilde{z}}=L_F$, where the function $F\in C_b(G)$ is such that: 
$F(p,q,r)=\bar{e}[p\cdot\tilde{x}+q\cdot\tilde{y}+r\tilde{z}]$.  Indeed, $L_{\tilde{x},\tilde{y},\tilde{z}}$ 
is contained in the multiplier algebra $M(S)$.  In a sense, the operators $L_{\tilde{x},\tilde{y},\tilde{z}}$ 
for $(\tilde{x},\tilde{y},\tilde{z})\in H$, form the building blocks for the ``regular representation'' $L$ 
(or equivalently, for $C^*$-algebra $S$).

For $\zeta\in{\mathcal H}$, we have:
\begin{align}
&\bigl(\Delta(L_{\tilde{x},\tilde{y},\tilde{z}})\bigr)
\zeta(x,y,r;x',y',r')=V_{\Theta}(L_{\tilde{x},\tilde{y},
\tilde{z}}\otimes1){V_{\Theta}}^*\zeta(x,y,r;x',y',r')
\notag \\
&=\bar{e}\bigl[(r+r')\tilde{z}\bigr]\bar{e}\left[\frac
{r^2}{2}\sum_{i,k}J_{ik}\tilde{y}_k(y_i-\tilde{y}_i)\right]
\bar{e}\bigl[r\beta(\tilde{x},y-\tilde{y})\bigr]  \notag \\
&\quad\bar{e}\left[\frac{{r'}^2}{2}\sum_{i,k}J_{ik}\tilde{y}_k
(y'_i-\tilde{y}_i)\right]\bar{e}\bigl[r'\beta(\tilde{x},
y'-\tilde{y})\bigr]\bar{e}\left[rr'\sum_{i,k}J_{ik}\tilde{y}_k
(y_i-\tilde{y}_i)\right]  \notag \\
&\quad\zeta\left(x-\tilde{x}-r'\sum_{i,k}J_{ik}\tilde{y}_k
\mathbf{x}_i,y-\tilde{y},r;x'-\tilde{x},y'-\tilde{y},r'
\right).
\notag
\end{align}

Meanwhile, consider $\Delta F\in C_b(G\times G)$, given by
\begin{align}
&(\Delta F)(p,q,r;p',q',r')  \notag \\
&=\bar{e}\left[(p+p')\cdot\tilde{x}+(q+q')\cdot\tilde{y}
+r'\sum_{i,k}J_{ik}p_i\tilde{y}_k+(r+r')\tilde{z}\right].
\notag
\end{align}
Then by a straightforward computation using Fourier inversion theorem, we can see that 
for $\zeta\in{\mathcal H}$:
$$
(L\otimes L)_{\Delta F}\zeta(x,y,r;x',y',r')
=\bigl(\Delta(L_{\tilde{x},\tilde{y},\tilde{z}})\bigr)\zeta(x,y,r;x',y',r').
$$
In other words, $(L\otimes L)_{\Delta F}=\Delta(L_F)$.  Remembering the definitions, 
it follows easily that $\Delta(L_f)=(L\otimes L)_{\Delta f}$ for any $f\in C_c(G)$, 
where $\Delta f$ is as defined above.
\end{proof}

\begin{rem}
This proposition shows that for $f\in C_c(G)$, the comultiplication sends it to 
$\Delta f\in C_b(G\times G)$, such that
$$
(\Delta f)(p,q,r;p',q',r')=f\bigl((p,q,r)(p',q',r')\bigr),
$$
preserving the group multiplication law on $G$ as given in equation~\eqref{(groupmult)}. 
Together with the result of Theorem~\ref{DQtheorem}, this result supports our assertion 
made earlier that $(S,\Delta)$ is a ``quantized $C_0(G)$''.
\end{rem}

At this moment, the $C^*$-bialgebra $(S,\Delta)$ is just a quantum semi-group.  For it 
to be properly considered as a locally compact quantum group, we need further discussions 
on maps like antipode and Haar weight.  This is given in the following section.

Meanwhile, notice the similarity between our example $(S,\Delta)$ above and the one constructed 
by Enock and Vainerman in Section 6 of \cite{EV}.  Looking at the comultiplications and the 
cocycles involved, we see some resemblance.  However, the methods of construction are rather 
different between the two.  In addition, there is another crucial difference.  Namely, 
in the example of \cite{EV}, the underlying von  Neumann algebra is isomorphic to the group 
von Neumann algebra ${\mathcal L}(H)=C^*(H)''$ of $H$.  While in our case, $S$ is isomorphic 
to a ``twisted'' crossed product algebra: Unless $J\equiv0$, the $C^*$-algebra $S$ is not 
isomorphic to $C^*(H)$.

In the author's opinion, the example $(S,\Delta)$ given here has more merit, considering that: 
(1) its Poisson--Lie group counterpart and its multiplicative unitary operator have all been 
obtained; (2) the relationship between the Poisson bracket and the cocycle bicrossed product 
construction of the multiplicative unitary operator have been manifested; (3) as well as that 
the underlying $C^*$-algebra is built on the framework of twisted crossed product algebras 
(more general than ordinary group $C^*$-algebras or group von Neumann algebras).

\section{The quantum group structure}

\subsection{$(S,\Delta)$ is a locally compact quantum group}

We now turn our attention to showing that the $C^*$-bialgebra $(S,\Delta)$ we constructed above 
is indeed a {\em locally compact quantum group\/}, in the precise sense of Kustermans and 
Vaes \cite{KuVa}, \cite{KuVavN}, or that of Masuda, Nakagami, and Woronowicz \cite{MNW}.  We 
could construct the Haar weight and other maps, along the lines of the general results by Van Daele 
\cite{VDHaar}, \cite{VDoamp}.  However, since it can be shown that our example is a case of 
a ``cocycle bicrossed product'' (in the sense of \cite{VV}), it is not really necessary to be overly 
technical.  See Lemma~\ref{matchinglemma} and Theorem~\ref{lcqg} below.

First, recall the matched pair $(G_1,G_2)$ we considered in Definition~\ref{matchedpair}. 
Our formulation at the time was motivated by the Poisson geometric data.  But this time, 
to make things to fit the algebraic framework given in \cite{VV}, let us work with the 
pair $(G_1,H/Z)$, where $H/Z$ is the dual of $G_2$.  To be more precise, consider: 
$$
G_1=\bigl\{(0,0,r):r\in\mathbb{R}\bigr\}\qquad
{\text {and }}\qquad H/Z=\bigl\{(x,y,0):x,y\in\mathbb{R}^n\bigr\}.
$$
We may use the (partial) Fourier transform to move between the functions on $G_2$ and those 
on $H/Z$.  It is not difficult to see that $(G_1,H/Z)$ forms a matched pair.  By abuse of 
notation, we again denote the actions by  $\alpha:G_1\times H/Z\to H/Z$ and $\gamma:H/Z\times G_1
\to G_1$.  We then have:
$$
\alpha_r(x,y):=\left(x+r\sum_{i,k=1}^nJ_{ik}y_k\mathbf{x}_i,y\right),
\qquad\gamma_{(x,y)}(r):=r.
$$
At the algebra level, we obtain the ${}^*$-isomorphism $\tau:L^{\infty}(G_1)\otimes L^{\infty}(H/Z)
\to L^{\infty}(G_1)\otimes L^{\infty}(H/Z)$, given by
$$
\bigl(\tau(f)\bigr)\bigl(r;(x,y)\bigr)=f\bigl(\gamma_{(x,y)}(r);\alpha_r(x,y)\bigr)
=f\left(r;x+r\sum_{i,k=1}^nJ_{ik}y_k\mathbf{x}_i,y\right).
$$
In fact, these computations were carried out earlier, though implicitly, in our discussion 
following Proposition~\ref{Z} leading up to Proposition~\ref{V_Theta}.  

\begin{lem}\label{matchinglemma}
As above, consider the matched pair $(G_1,H/Z)$, together with the corresponding actions 
$\alpha$ and $\gamma$.  Define ${\mathcal U}:G_1\times G_1\times H/Z\to\mathbb{T}$ and 
${\mathcal V}:G_1\times H/Z\times H/Z\to\mathbb{T}$, given by 
$$
{\mathcal U}\equiv\operatorname{Id},\qquad
{\text {and }}\qquad
{\mathcal V}\bigl(r;(x,y),(x',y')\bigr)=\bar{e}\left[\frac{{r}^2}{2}\sum_{i,k}J_{ik}y'_ky_i\right]
\bar{e}\bigl[r\beta(x',y)\bigr].
$$
Then $(\tau,{\mathcal U},{\mathcal V})$ is a ``cocycle matching'' of $L^{\infty}(G_1)$ and 
$L^{\infty}(H/Z)$, with their natural quantum group structures.
\end{lem}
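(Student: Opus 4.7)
The plan is to verify, one by one, the ingredients that make $(\tau,{\mathcal U},{\mathcal V})$ a cocycle matching in the sense of \cite{VV}: (i) that $\tau$ is a matching of $L^{\infty}(G_1)$ and $L^{\infty}(H/Z)$, i.e.\ that the pair of actions $(\alpha,\gamma)$ satisfies the matched-pair compatibility; (ii) that ${\mathcal U}$ is a normalized $2$-cocycle with values in $L^{\infty}(H/Z)$ for the induced comultiplication on $L^{\infty}(G_1)$; (iii) that ${\mathcal V}$ is, in the analogous sense, a normalized $2$-cocycle for $L^{\infty}(H/Z)$; and (iv) that ${\mathcal U}$ and ${\mathcal V}$ satisfy the twin compatibility (``twisting'') identities with $\tau$ that convert the ordinary bicrossed product datum into a cocycle bicrossed product.

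For (i), I would simply translate the formula for $\tau$ into the statement that $\alpha\colon G_1\curvearrowright H/Z$ by $r\cdot(x,y)=(x+r\sum_{i,k}J_{ik}y_k\mathbf{x}_i,y)$ and $\gamma$ (which is trivial) form a matched pair, which was already observed in Definition~\ref{matchedpair} once the partial Fourier transform identifying $L^{\infty}(G_2)$ with $L^{\infty}(H/Z)$ is applied. Because $G_1$ is abelian and $\gamma$ is trivial, the usual pentagon-type identity for $\tau$ reduces to checking that $\alpha_{r+r'}=\alpha_r\alpha_{r'}$, which is immediate from the formula. For (ii), ${\mathcal U}\equiv1$ is trivially a normalized $2$-cocycle. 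For (iii), the $2$-cocycle identity for ${\mathcal V}(r;\ ,\ )$ in the $H/Z$ variables, for each fixed $r$, is exactly the statement verified in Proposition~\ref{groupcocycle} for $\sigma^r$ (up to the obvious identification ${\mathcal V}(r;(x,y),(x',y'))=\sigma^r((x',y'),(x,y))$, which is allowed since $H/Z$ is abelian); normalization is clear from the formula.

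The real content lies in (iv), the compatibility identities tying ${\mathcal U}$, ${\mathcal V}$, and $\tau$ together. With ${\mathcal U}\equiv1$, the identities involving ${\mathcal U}$ collapse to the requirement that ${\mathcal V}$ be invariant in an appropriate sense under the action encoded by $\tau$ on the first tensor leg, while the surviving identity involving ${\mathcal V}$ alone is the ``quasi-cocycle'' relation across the $G_1$-variable, which says essentially that composing $\tau$ with the $H/Z$-multiplication produces the correct correction factor. I would verify these directly by substituting the explicit formulas: the $r$-dependence in ${\mathcal V}$ enters through the quadratic expression $\frac{r^2}{2}\sum_{i,k}J_{ik}y'_ky_i$ and the linear expression $r\beta(x',y)$, and the shift $x\mapsto x+r\sum_{i,k}J_{ik}y_k\mathbf{x}_i$ coming from $\tau$ is tuned precisely so that these match up. In practice, rather than redo these computations from scratch, I would appeal to Proposition~\ref{V_Theta}\,(2): the fact that $V_{\Theta}=V\Theta$ is multiplicative is equivalent to the whole cocycle matching condition, since the pentagon equation for $V_{\Theta}$ decomposes, upon substituting $V=(Z_{12}X_{24}Z_{12}^{*})Y_{13}$, into exactly the matching identity for $\tau$ (given by $Z$), the cocycle identity for ${\mathcal V}$ on the $Y$-leg, and the twist compatibility between $Z$ and $\Theta$. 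The main obstacle is bookkeeping: one must keep track of which tensor legs each factor lives on and confirm that every term produced by the pentagon equation appears in the corresponding cocycle-matching axiom of \cite{VV}; once the dictionary is set up carefully this reduces to an algebraic rearrangement rather than any new computation.
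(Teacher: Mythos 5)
Your proposal is correct and, for steps (i)--(iii), coincides with what the paper does: the paper simply writes out the three cocycle-matching identities from equation~(4.2) of \cite{VV} (the $\mathcal U$-cocycle identity, trivial since $\mathcal U\equiv1$; the $\mathcal V$-cocycle identity in the $H/Z$ legs, which is exactly the $\sigma^r$ cocycle identity of Proposition~\ref{groupcocycle} after the flip ${\mathcal V}(r;h,h')=\sigma^r(h',h)$; and the mixed compatibility identity), checks them by direct substitution, and invokes Lemma~4.11 of \cite{VV}. Where you diverge is step (iv): the paper verifies the mixed identity ${\mathcal V}(r+r';h,h')={\mathcal V}\bigl(r;\alpha_{r'}h,\alpha_{r'}h'\bigr){\mathcal V}(r';h,h')$ directly --- a one-line computation once one notes that $\beta\bigl(x'+r'\sum_{i,k}J_{ik}y'_k\mathbf{x}_i,\,y\bigr)=\beta(x',y)+r'\sum_{i,k}J_{ik}y'_ky_i$, so the quadratic term $\tfrac{r^2}{2}\sum J_{ik}y'_ky_i$ is exactly what makes the exponents add to $\tfrac{(r+r')^2}{2}$ --- whereas you propose to extract all the axioms from the already-established pentagon equation for $V_{\Theta}$. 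That route is legitimate in principle, but be aware that the pentagon equation is a single pointwise identity in all the leg variables, and it does not formally ``decompose into'' the three separate axioms; to recover each axiom you must specialize variables (set appropriate $r$'s or $(x,y)$'s to the identity) rather than merely rearrange terms, and you should also note that the paper itself proved the pentagon for $V_{\Theta}$ by direct computation, so nothing is actually saved. The direct check of the three identities is shorter and is what the paper intends; your outline would be complete once you either carry out that substitution or supply the specialization argument for disassembling the pentagon.
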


\begin{rem}
Observe that ${\mathcal V}$ is such that ${\mathcal V}\bigl(r;(x,y),(x',y')\bigr)
=\sigma^r\bigl((x',y'),(x,y)\bigr)$, where $\sigma$ is the cocycle function given 
in Proposition~\ref{V_Theta}\,(2).
\end{rem}

\begin{proof}
Using the definition, we can verify the cocycle conditions given in equation~(4.2) of \cite{VV}. 
Namely, the maps ${\mathcal U}$ and ${\mathcal V}$ satisfy 
\begin{align}
\bullet\ &{\mathcal U}\bigl(r,r';\alpha_{r''}(x,y)\bigr){\mathcal U}\bigl(r+r',r'';(x,y)\bigr)
={\mathcal U}(r',r'';(x,y)\bigr){\mathcal U}\bigl(r,r'+r'';(x,y)\bigr),  \notag \\
\bullet\ &{\mathcal V}\bigl(\gamma_{(x,y)}(r);(x',y'),(x'',y'')\bigr)
{\mathcal V}\bigl(r;(x,y),(x''+x',y''+y')\bigr)  \notag \\
&={\mathcal V}\bigl(r;(x,y),(x',y')\bigr){\mathcal V}\bigl(r;(x'+x,y'+y),(x'',y'')\bigr),  \notag \\
\bullet\ &{\mathcal V}\bigl(r+r';(x,y),(x',y')\bigr)\overline{{\mathcal U}\bigl(r,r';(x'+x,y'+y)\bigr)} 
\notag \\
&=\overline{{\mathcal U}\bigl(r,r';(x,y)\bigr)}\,\overline{{\mathcal U}\bigl(\gamma_{\alpha_{r'}(x,y)}(r),
\gamma_{(x,y)}(r');(x',y')\bigr)}   \notag \\
&\quad\cdot{\mathcal V}\bigl(r;\alpha_{r'}(x,y),\alpha_{\gamma_{(x,y)}(r')}(x',y')\bigr)
{\mathcal V}\bigl(r';(x,y),(x',y')\bigr).
\notag
\end{align}
This is to be expected, considering that ${\mathcal V}$ comes from the cocycle function $\sigma$. 
Thus by Lemma~4.11 of \cite{VV}, we prove the result.
\end{proof}

Therefore by Definition~2.2 and Theorem~2.13 both of \cite{VV}, we obtain the ``cocycle bicrossed 
product'' $M=L^{\infty}(G_1)_{\alpha,{\mathcal U}}\ltimes L^{\infty}(H/Z)$, which is a locally 
compact quantum group.  The associated dual locally compact quantum group is denoted by $\hat{M}$ 
(see again Theorem~2.13 of \cite{VV}).  By construction, it turns out that our $(S,\Delta)$ obtained 
in the previous section is really the $C^*$-algebra counterpart to $\hat{M}$.  The result is below:

\begin{theorem}\label{lcqg}
Our $C^*$-bialgebra $(S,\Delta)$ is none other than the dual of the cocycle bicrossed product 
obtained by the matched pair $(G_1,H/Z)$ and the cocycle maps ${\mathcal U}$ and ${\mathcal V}$. 
Therefore, we conclude that $(S,\Delta)$ is itself a locally compact quantum group.
\end{theorem}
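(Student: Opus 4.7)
The strategy is to invoke the Vaes--Vainerman machinery now that Lemma~\ref{matchinglemma} has verified the hypotheses of a cocycle matching. By [VV, Definition~2.2 and Theorem~2.13], the cocycle matching $(\tau,\mathcal{U},\mathcal{V})$ automatically yields a locally compact quantum group $M=L^\infty(G_1)_{\alpha,\mathcal{U}}\ltimes L^\infty(H/Z)$ together with a dual l.c.q.g. $\hat M$, each equipped with coproduct, antipode, and left/right Haar weights. What remains is simply to identify the $C^*$-bialgebra $(S,\Delta)$ with the $C^*$-algebraic incarnation of $\hat M$; the quantum group structure on $(S,\Delta)$ then follows without further work.

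The bridge is the canonical multiplicative unitary $W_M$ attached to $M$. In [VV, Section~2], this unitary is written down explicitly in terms of the matched pair $(G_1,H/Z)$ and the cocycles $\mathcal{U},\mathcal{V}$: it is the Baaj--Skandalis unitary built from the bicrossed structure, twisted by $\mathcal{U}$ and $\mathcal{V}$. First I would write out this formula on $L^2(G_1\times H/Z)=\mathcal{H}$ and compare it leg-by-leg to $V_\Theta$ from Proposition~\ref{V_Theta}\,(2). Since $\mathcal{U}\equiv1$, only the $\mathcal{V}$-twist is nontrivial; and by the remark following Lemma~\ref{matchinglemma}, $\mathcal{V}(r;(x,y),(x',y'))=\sigma^r((x',y'),(x,y))$ is precisely the cocycle built into $\Theta$. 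After accounting for the dualization flip $\Sigma W\Sigma^{*}$ that relates $M$ and $\hat M$, the two multiplicative unitaries should coincide.

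Once $W_M$ is matched with $V_\Theta$, the $C^*$-algebras and coproducts obtained from the left and right slices agree on both sides. In particular, the realization $\hat S\cong C_0(H/Z)\rtimes_{\alpha}G_1$ from Proposition~\ref{V_Theta}\,(2) already coincides, at the $C^*$-algebra level, with the (untwisted, since $\mathcal{U}\equiv 1$) crossed product underlying $M$; hence $M$ is the von Neumann envelope of $\hat S$, and $(S,\Delta)$ is correspondingly the $C^*$-algebraic incarnation of $\hat M$. By [VV, Theorem~2.13], $\hat M$ is an l.c.q.g., and therefore so is $(S,\Delta)$. The Haar weight and antipode on $(S,\Delta)$ are then inherited from those on $\hat M$, which is what makes the remaining Section~4 discussion a straightforward matter of unwinding definitions.

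The main obstacle is bookkeeping: reconciling the conventions of [VV] (direction of slicing, the side on which $\mathcal{U}$ and $\mathcal{V}$ are inserted into $W_M$, and the orientation of duality) with the Baaj--Skandalis conventions used throughout Section~3, so that the identification $(S,\Delta)\cong\hat M$ is made with the correct, rather than the opposite, quantum-group structure. The crossed-product descriptions in Proposition~\ref{V_Theta}\,(2) already make the correspondence essentially visible; nevertheless, verifying $W_M=V_\Theta$ (up to flip) is the only place where one must actually compute, and once that check is in hand the theorem is complete.
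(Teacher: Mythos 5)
Your route is the paper's: Lemma~\ref{matchinglemma} together with Definition~2.2 and Theorem~2.13 of \cite{VV} produce the locally compact quantum groups $M$ and $\hat M$ with their canonical multiplicative unitary $\hat W$, and the theorem reduces to matching that unitary against $V_\Theta$ from Proposition~\ref{V_Theta}\,(2) so that the algebras and comultiplications transfer. The one step that fails as you state it is the claim that the bare dualization $\Sigma\hat W^{*}\Sigma$ already coincides with $V_\Theta$. The identity the paper actually establishes is
\[
V_\Theta=\hat V:=(K\otimes K)\,\Sigma\hat W^{*}\Sigma\,(K\otimes K)
\]
(after flipping the $(x,y)$ and $r$ legs), where $K$ is the conjugate-linear involution $K\xi(r;x,y)=\overline{\xi\bigl(-r;x+r\sum_{i,k}J_{ik}y_k\mathbf{x}_i,y\bigr)}$ of equation~\eqref{(involutiveK)}; this $K$ plays the role of the modular-type operator $J$ in Proposition~2.15 of \cite{KuVavN} and is tied to the unitary antipode via $K(L_f)^{*}K=L_{\kappa(f)}$ (Proposition~\ref{antipodekappa}). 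The conjugation by $K\otimes K$ is not dispensable bookkeeping: without it the phase factors and the arguments of $\xi$ do not reproduce $V_\Theta$ on the nose, and since the whole argument rests on a literal operator equality, you need $\hat V$ rather than $\Sigma\hat W^{*}\Sigma$.

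A second, smaller correction: $\hat V$ lies in $M'\otimes\hat M$, so its left slices generate $\hat M$ and its right slices generate the commutant $M'$, not $M$. The identification is therefore $S=\hat M$ at the $C^{*}$-level, with $\hat M$ the von Neumann envelope of $S$; your sentence making $M$ the von Neumann envelope of $\hat S$ puts the duality on the wrong side. Apart from these two points, the plan --- writing out $\hat W$ from Section~4.4 of \cite{VV}, using ${\mathcal U}\equiv1$ and ${\mathcal V}\bigl(r;(x,y),(x',y')\bigr)=\sigma^{r}\bigl((x',y'),(x,y)\bigr)$, and computing --- is exactly what the paper does.
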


\begin{proof}
An efficient way is to work with the multiplicative unitary operators.  So consider 
$W_1\in{\mathcal B}\bigl(L^2(G_1\times G_1)\bigr)$ and $W_2\in{\mathcal B}
\bigl(L^2(H/Z\times H/Z)\bigr)$ such that
$$
W_1\xi(r;r')=\xi(r;r+r'),\qquad
\hat{W}_2\zeta(x,y;x',y')=\zeta(x,y;x'+x,y'+y),
$$
for $\xi\in L^2(G_1\times G_1)$ and $\zeta\in L^2(H/Z\times H/Z)$.  They determine the 
natural quantum group structures on $L^{\infty}(G_1)$ and $L^{\infty}(H/Z)$.  By Definition~2.2 
of \cite{VV}, as well as the discussion in Section 4.4 of the same paper, the quantum groups 
$M$ and $\hat{M}$ are determined by the multiplicative unitary operator $\hat{W}\in{\mathcal B}
\bigl(L^2(G_1\times H/Z\times G_1\times H/Z)\bigr)$, defined by
$$
\hat{W}=(\gamma\otimes\operatorname{id}\otimes\operatorname{id})\bigl((W_1\otimes1){\mathcal U}^*\bigr)
(\operatorname{id}\otimes\operatorname{id}\otimes\alpha)\bigl({\mathcal V}(1\otimes\hat{W}_2)\bigr).
$$
In our case, it becomes:
\begin{align}
&\hat{W}\xi(r;x,y;r';x',y')  \notag \\
&=\overline{{\mathcal U}\bigl(\gamma_{(x,y)}(r);-\gamma_{(x,y)}(r)+r';(x',y')\bigr)}
{\mathcal V}\bigl(r;(x,y),\alpha_{[-\gamma_{(x,y)}(r)+r']}(x',y')\bigr)   \notag \\
&\quad\cdot\xi\bigl(r;\alpha_{[-\gamma_{(x,y)}(r)+r']}(x',y')+(x,y);-\gamma_{(x,y)}(r)+r';(x',y')\bigr)
\notag \\
&={\mathcal V}\bigl(r;(x,y);\alpha_{[r'-r]}(x',y')\bigr)
\xi\bigl(r;(x,y)+\alpha_{[r'-r]}(x',y');r'-r;x',y'\bigr)  \notag \\
&=\bar{e}\left[\frac{{r}^2}{2}\sum_{i,k}J_{ik}y'_ky_i\right]
\bar{e}\bigl[r\beta\bigl(x'+(r'-r)\sum_{i,k}J_{ik}y'_k\mathbf{x}_i,y\bigr)\bigr]  \notag \\
&\quad\cdot\xi\bigl(r;x+x'+(r'-r)\sum_{i,k}J_{ik}y'_k\mathbf{x}_i,y+y';r'-r,x',y'\bigr)  \notag\\
&=e\left[\frac{{r}^2}{2}\sum_{i,k}J_{ik}y'_ky_i\right]
\bar{e}[rr'\sum_{i,k}J_{ik}y'_ky_i]\bar{e}\bigl[r\beta(x',y)\bigr]   \notag \\
&\quad\cdot\xi\bigl(r;x+x'+(r'-r)\sum_{i,k}J_{ik}y'_k\mathbf{x}_i,y+y';r'-r,x',y'\bigr).
\notag
\end{align}
By general theory, it is known to be multiplicative, so that $\hat{W}\in\hat{M}\otimes M$.
The right slices of $\hat{W}$ generate $\hat{M}$ while the left slices of $\hat{W}$ generate $M$.  

Now consider an involutive operator $K\in{\mathcal B}\bigl(L^2(G_1\times H/Z)\bigr)$, defined by 
\begin{equation}\label{(involutiveK)}
K\xi(r;x,y)=\overline{\xi(-r;x+r\sum_{i,k}J_{ik}y_k\mathbf{x}_i,y)}.
\end{equation}
We will postpone the discussion of the nature of the operator $K$ for the time being (It has 
to do with the ``antipode'' map on our quantum group: See Proposition~\ref{antipodekappa}.). 
Using this, define the operator $\hat{V}$ by
$$
\hat{V}=(K\otimes K)\Sigma\hat{W}^*\Sigma(K\otimes K).
$$
Here $\Sigma$ denotes the flip.  Then $\hat{V}$ is also multiplicative, and the general theory shows that 
$\hat{V}\in M'\otimes\hat{M}$, where $M'$ is the commutant of $M$.  See Proposition~2.15 of \cite{KuVavN}, 
with the understanding that their $J$ operator is $K$ here, so that we do not cause any confusion with 
the skew-symmetric matrix $J$ in our case.  The left slices of $\hat{V}$ generate $\hat{M}$ while the 
right slices of $\hat{V}$ generate $M'$.  

After a straightforward computation using the formulas obtained above, we have: 
\begin{align}
&\hat{V}\xi(r;x,y;r';x',y')   \notag \\
&=e\left[\frac{{r'}^2}{2}\sum_{i,k}J_{ik}y_k(y'_i-y_i)\right]
\bar{e}\bigl[r'\beta(x,y'-y)\bigr]  \notag \\
&\quad\cdot\xi(r+r';x-r'\sum_{i,k}J_{ik}y_k\mathbf{x}_i,y;r';x'-x+r'\sum_{i,k}J_{ik}y_k\mathbf{x}_i,y'-y).
\notag
\end{align}
Compare this result with the definition of the multiplicative unitary operator $V_{\Theta}$ 
we constructed in Propostion~\ref{V_Theta}\,(2), which is exactly the same! [To be really precise, 
we need to flip the $(x,y)$ and the $r$.]  The multiplicative unitary operators being the same 
means that the $C^*$-(or v.N) algebras they generate must agree.  In particular, considering 
the left slices of $\hat{V}=V_{\Theta}$, we conclude that at the $C^*$-algebra level, $\hat{M}$ 
and $S$ must coincide.  It follows that our $(S,\Delta)$ is actually a $C^*$-algebraic locally 
compact quantum group, whose von Neumann algebra envelope is $\hat{M}$.
\end{proof}

\subsection{Other structure maps: Antipode and Haar weight}

While the proof that $(S,\Delta)$ is a quantum group is done, it will be still useful to 
know its other quantum group structure maps, namely, the antipode map and the Haar weight. 
We will try to be brief here (skipping some details), but we wish to point out some nice 
correspondence relations between the classical (Poisson) data and the quantum level, 
strengthening our case that $(S,\Delta)$ is essentially a ``quantized $C_0(G)$''.

Correctly constructing the antipode map from the definitions is rather technical. 
See the main papers \cite{KuVa}, \cite{KuVavN}, and also a new treatment given 
in \cite{VDvN}, which uses the Tomita--Takesaki theory.  For our purposes, though, 
we will just use the following characterization of the antipode, denoted here by 
$\kappa$, given in terms of the multiplicative unitary operator:
\begin{equation}\label{(antipode)}
\kappa\bigl((\omega\otimes\operatorname{id})(V_{\Theta})\bigr)
=(\omega\otimes\operatorname{id})(V_{\Theta}^*).
\end{equation}
The subspace consisting of the elements $(\omega\otimes\operatorname{id})
(V_{\Theta})$, for $\omega\in{\mathcal B}({\mathcal H})_*$, is dense in $S$, 
and forms a core for $\kappa$.

At the level of the dense subspace of functions in $C_c(H/Z\times G_1)$, in the $(x,y;r)$ 
variables, the antipode $\kappa$ in our case takes the following form. 

\begin{prop}\label{antipodekappa}
Let $\kappa:C_c(H/Z\times G_1)\to C_c(H/Z\times G_1)$ be defined by
$$
\bigl(\kappa(f)\bigr)(x,y,r)=\bar{e}\left[\frac{r^2}{2}\sum_{i,k}J_{ik}y_iy_k\right]
\bar{e}\bigl[r\beta(x,y)\bigr]f\left(-x-r\sum_{i,k}J_{ik}y_k\mathbf{x}_i,-y,-r\right).
$$
This map corresponds to the definition of $\kappa$ given in equation~\eqref{(antipode)}, 
and turns out to be a bounded map.  By general theory, its extension to the $C^*$-algebra $S$, 
still denoted by $\kappa$, is the antipode map on $(S,\Delta)$.  Moreover, the antipode 
map $\kappa$ is related with the operator $K$ in equation~\eqref{(involutiveK)} by
$$
K(L_f)^*K=L_{\kappa(f)},\qquad f\in C_c(H/Z\times G_1),
$$
where $L_f\in S$ denotes the operator realization of the function $f$.  It follows that 
$\kappa^2\equiv\operatorname{Id}$.
\end{prop}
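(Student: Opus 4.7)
The plan is to establish the three assertions in reverse logical order: first the operator identity $K(L_f)^\ast K=L_{\kappa(f)}$, then boundedness, then the compatibility with the abstract definition \eqref{(antipode)}, and finally the involutivity $\kappa^2=\operatorname{Id}$.

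First I would verify the identity $K(L_f)^\ast K = L_{\kappa(f)}$ by direct computation on vectors $\xi\in{\mathcal H}$. Using the representation formula \eqref{(Lrep)}, the adjoint $(L_f)^\ast$ acts by the kernel $\overline{f^\vee(-\tilde x,-\tilde y,r)\,\sigma^r((\tilde x,\tilde y),(x-\tilde x,y-\tilde y))}$ after the obvious change of variables, and conjugation by the antilinear involution $K$ from \eqref{(involutiveK)} turns the $r$ variable into $-r$ and translates $x$ by $r\sum_{i,k}J_{ik}y_k\mathbf{x}_i$. The cocycle identity for $\sigma^r$ (encoded in Proposition~\ref{V_Theta}) collapses the resulting phase into the single factor $\bar e[\frac{r^2}{2}\sum_{i,k}J_{ik}y_iy_k]\bar e[r\beta(x,y)]$ that appears in the definition of $\kappa$, and one recognizes $L_{\kappa(f)}$ on the nose. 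This is the main computational step and also the main obstacle, since one has to be careful that the phase arising from $\sigma^r$ applied twice (once in $L_f$, once in $L_{\kappa(f)}$) combines consistently with the shift $x\mapsto x+r\sum_{i,k}J_{ik}y_k\mathbf{x}_i$ coming from $K$; the skew-symmetry of $J$ is what makes the mixed terms cancel.

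Once that operator identity is in hand, boundedness of $\kappa$ on the dense subspace is automatic, since $\|L_{\kappa(f)}\|=\|K(L_f)^\ast K\|=\|L_f\|$ (both $K$ and the adjoint are isometric), so $\kappa$ extends by continuity to the whole $C^\ast$-algebra $S$. To match with the abstract definition \eqref{(antipode)}, I would invoke the characterization of the antipode recalled just before the proposition, combined with the identity
\[
\hat V = (K\otimes K)\Sigma\hat W^\ast\Sigma(K\otimes K)
\]
used in the proof of Theorem~\ref{lcqg} (with $\hat V=V_\Theta$). Taking a right slice by $\omega$ of $V_\Theta$ and of $V_\Theta^\ast$ and comparing the two, the conjugation by $K$ inside this formula is exactly what converts slices of $V_\Theta$ into slices of $V_\Theta^\ast$ via the operation $a\mapsto Ka^\ast K$; consequently the map sending $(\omega\otimes\operatorname{id})(V_\Theta)$ to $K(\omega\otimes\operatorname{id})(V_\Theta)^\ast K$ agrees with $(\omega\otimes\operatorname{id})(V_\Theta^\ast)$ on this core. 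By the uniqueness of the antipode this map is $\kappa$, and on the dense subalgebra of operators $L_f$ it reduces to the explicit formula in the statement.

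Finally, for $\kappa^2=\operatorname{Id}$, one simply iterates the established identity: since $K^2=\operatorname{Id}$ and the involution is an involution,
\[
L_{\kappa^2(f)} = K\bigl(L_{\kappa(f)}\bigr)^\ast K = K\bigl(K L_f^\ast K\bigr)^\ast K = K\bigl(K L_f K\bigr)K = L_f,
\]
and faithfulness of the representation $f\mapsto L_f$ on the dense subspace gives $\kappa^2=\operatorname{Id}$, which then extends to all of $S$ by the boundedness established earlier.
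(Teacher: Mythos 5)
Your first, second, and fourth steps are sound and essentially reproduce what the paper does for those claims: the identity $K(L_f)^*K=L_{\kappa(f)}$ is indeed verified by a direct kernel computation, and since $K$ is an antiunitary involution this immediately yields boundedness of $\kappa$ on $S$ and $\kappa^2=\operatorname{Id}$, exactly as in your last display. The problem is your third step, which is the part of the proposition asserting that the explicit formula for $\kappa$ \emph{is} the antipode of equation~\eqref{(antipode)}.

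The identity $\hat V=(K\otimes K)\Sigma\hat W^*\Sigma(K\otimes K)$ relates $V_\Theta=\hat V$ to the \emph{other} multiplicative unitary $\hat W$, not to $V_\Theta^*$. If you slice it, you obtain (after tracking the antilinearity of $K$ and the flip $\Sigma$) a relation of the form $K\bigl[(\omega_{\eta,\zeta}\otimes\operatorname{id})(V_\Theta)\bigr]^*K=(\operatorname{id}\otimes\omega_{K\zeta,K\eta})(\hat W)$; to conclude that this equals $(\omega_{\eta,\zeta}\otimes\operatorname{id})(V_\Theta^*)$ you would need a second, independent computation relating slices of $\hat W$ to slices of $V_\Theta^*$, which you do not supply. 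More fundamentally, the statement $(\omega\otimes\operatorname{id})(W^*)=K\bigl[(\omega\otimes\operatorname{id})(W)\bigr]^*K$ is \emph{not} a general fact: in the Kustermans--Vaes framework the map $(\omega\otimes\operatorname{id})(W)\mapsto(\omega\otimes\operatorname{id})(W^*)$ is the full antipode $R\tau_{i/2}$, which is unbounded unless the scaling group is trivial, whereas $a\mapsto Ka^*K$ is the bounded unitary antipode. That these two coincide here (equivalently, that $\kappa$ is bounded and involutive) is precisely the nontrivial content of the proposition, so invoking it at this point is circular. The paper avoids this by brute force: for $\omega=\omega_{\eta,\zeta}$ with $\eta,\zeta\in C_c(H/Z\times G_1)$ it computes $(\omega_{\eta,\zeta}\otimes\operatorname{id})(V_\Theta)=L_f$ and $(\omega_{\eta,\zeta}\otimes\operatorname{id})(V_\Theta^*)=L_g$ as explicit integral kernels and reads off that $g$ is exactly $\kappa(f)$ for the stated formula; only afterwards is the relation $K(L_f)^*K=L_{\kappa(f)}$ used to get boundedness and $\kappa^2=\operatorname{Id}$. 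You need to carry out that two-sided slice computation (or an equivalent direct argument) to close the gap.
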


\begin{proof}
For $\eta,\zeta\in C_c(H/Z\times G_1)$, consider $\omega_{\eta,\zeta}\in{\mathcal  B}
({\mathcal H})_*$, defined by $\omega_{\eta,\zeta}(T):=\langle T\eta,\zeta\rangle$. 
Since $C_c(H/Z\times G_1)$ is dense in ${\mathcal H}$, it is clear that the 
$\omega_{\eta,\zeta}$ are dense in ${\mathcal  B}({\mathcal H})_*$.  Meanwhile, by 
a straightforward calculation, we can show that the operator $(\omega_{\eta,\zeta}
\otimes\operatorname{id})(V_{\Theta})$ can be realized as $L_f$, where $f$ is a 
function contained in $C_c(H/Z\times G_1)$ defined by
$$
f(x,y;r)=\int\eta(x,y,r+\tilde{r})\overline{\zeta\left(x+r\sum_{i,k}J_{ik}y_k\mathbf{x}_i,y,
\tilde{r}\right)}\,d\tilde{r}.
$$
Similarly, $(\omega_{\eta,\zeta}\otimes\operatorname{id})(V_{\Theta}^*)$ can be 
realized as $L_g$, where
\begin{align}
g(x,y;r)&=\int\bar{e}\left[\frac{r^2}{2}\sum_{i,k}J_{ik}y_ky_i\right]\bar{e}\bigl[r\beta(x,y)\bigr]
\notag \\
&\qquad\eta\left(-x-r\sum_{i,k}J_{ik}y_k\mathbf{x}_i,-y,-r+\tilde{r}\right)
\overline{\zeta(-x,-y,\tilde{r})}\,d\tilde{r}.
\notag
\end{align}
By equation~\eqref{(antipode)}, the function $g$ is none other than $\kappa(f)$. 
Comparing it with the expression for $f$ above, we obtain the result of the proposition.  
Since the $\omega_{\eta,\zeta}$ are dense in ${\mathcal  B}({\mathcal H})_*$, this 
characterization of the $\kappa$ map is sufficient.

Meanwhile, we also have: $K(L_f)^*K=L_{\kappa(f)}$, where $f\in C_c(H/Z\times G_1)$ 
and $\kappa(f)$ is as above.  Calculation is straightforward.  Since $K$ is a bounded 
operator and involutive, this implies that $\kappa:L_f\mapsto L_{\kappa(f)}$ can be extended 
to a bounded map on all of the $C^*$-algebra $S$, with $\kappa^2\equiv\operatorname{Id}$.
\end{proof}

\begin{rem}
Note that when $\sigma\equiv1$, we have:
$$
\bigl(\kappa(f)\bigr)(x,y,r)=f\left(-x-r\sum_{i,k}J_{ik}y_k\mathbf{x}_i,-y,-r\right).
$$
If we express this in the $(p,q,r)$-variables, by the partial Fourier transform, it becomes: 
$$
\bigl(\kappa(f)\bigr)(p,q,r)=f\left(-p,-q+r\sum_{i,k=1}^n J_{ik}p_i\mathbf{q}_k,-r\right)
=f\bigl((p,q,r)^{-1}\bigr).
$$
What all this means is that in the commutative case (when $\sigma\equiv1$), the antipode map 
is just taking the inverse in the group $G$.  This again strengthens our point that 
$(S,\Delta)$ is a ``quantized $C_0(G)$''.
\end{rem}

Finally, let us turn our attention to the Haar weight on our quantum group $(S,\Delta)$. 
At the classical level, recall that the group structure on $G$ was chosen in 
equation~\eqref{(groupmult)} so that an ordinary Lebesgue measure on $G=\mathbb{R}^{2n+1}$ 
becomes its (left invariant) Haar measure.  This suggests us to build the Haar weight on 
$(S,\Delta)$ from the Lebesgue measure on $G$.  At the level of the functions in ${\mathcal A}
={\mathcal S}_{3c}(G)$, this suggestion is manifested in Definition~\ref{haar} below:

\begin{defn}\label{haar}
(1). On ${\mathcal A}$, define a linear functional $\varphi$ by
$$
\varphi(f)=\int f(p,q,r)\,dpdqdr.
$$

(2). At the level of the functions in ${\mathcal S}_{3c}(H/Z\times G_1)$, in the $(x,y;r)$ variables, 
this is equivalent to the linear functional $\varphi_S$ below:
$$
\varphi_S(f)=\int f(0,0;r)\,dr.
$$
\end{defn}

\begin{lem}\label{haarlemma}
Let $\varphi_S$ be the linear functional given in Definition~\ref{haar}.  It satisfies the following 
``left invariance property'':
\begin{equation}\label{(leftinvariance)}
(\operatorname{id}\otimes\varphi_S)\bigl((1\otimes f)(\Delta g)\bigr)
=\kappa\bigl((\operatorname{id}\otimes\varphi_S)((\Delta f)(1\otimes g))\bigr),
\end{equation}
for $f,g\in{\mathcal S}_{3c}(H/Z\times G_1)$.
\end{lem}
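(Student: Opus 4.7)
The plan is to reduce the strong left invariance identity to the classical left invariance of the Lebesgue measure on $G$, plus a cocycle bookkeeping step. Via the partial Fourier transform, I would first translate everything to the $(p,q,r)$ variables: the functional $\varphi_S$ becomes ordinary Lebesgue integration $\varphi(f)=\int f(p,q,r)\,dp\,dq\,dr$, the product in $S$ becomes the twisted product $\times$ of equation~\eqref{(deformedproductpqr)}, and by the proposition immediately preceding this lemma, $\Delta g$ is the function $(\Delta g)(p,q,r;p',q',r')=g\bigl((p,q,r)(p',q',r')\bigr)$ together with the additional cocycle factors that dress operator composition in $S\otimes S$.

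Next, I would write out the LHS $(\operatorname{id}\otimes\varphi_S)\bigl((1\otimes f)(\Delta g)\bigr)$ as an iterated integral, by expanding the product in the second tensor leg via $\times$ in the $(p',q',r')$ variables and integrating that leg against $dp'dq'dr'$. A substitution in $(p',q',r')$ corresponding to left translation by $(p,q,r)$ in $G$ should convert the result into a concrete function of $(p,q,r)$. The analogous recipe produces $(\operatorname{id}\otimes\varphi_S)\bigl((\Delta f)(1\otimes g)\bigr)$ on the RHS, to which I would then apply $\kappa$ using the explicit formula in Proposition~\ref{antipodekappa}. In the undeformed limit ($J=O$, $\sigma\equiv1$), the two sides agree at once, since Lebesgue measure is a left Haar measure for the group law \eqref{(groupmult)} and $\kappa$ reduces to the group inversion.

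The deformed case differs only in that both sides acquire cocycle phases: some from the twisted product $\times$, some from the unitary $V_\Theta$ that sits inside $\Delta$ via Proposition~\ref{V_Theta}, and some from the phase factor $\bar e\bigl[\frac{r^2}{2}\sum_{i,k}J_{ik}y_iy_k\bigr]\bar e\bigl[r\beta(x,y)\bigr]$ contributed by $\kappa$. The crucial task is to verify that, after the change of variables used above, these phases combine to match exactly on the two sides. The required identity should follow from the cocycle relation for $\sigma^r$ (established in the proof of Proposition~\ref{groupcocycle}) together with the intertwining $K(L_f)^*K=L_{\kappa(f)}$ from Proposition~\ref{antipodekappa}.

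The main obstacle is precisely this cocycle bookkeeping: although straightforward in principle, keeping track of all the Fourier phases is where the errors are easy to make. A conceptually cleaner fallback is to invoke Theorem~\ref{lcqg}: since we have already identified $(S,\Delta)$ with the dual of the cocycle bicrossed product $M=L^{\infty}(G_1)_{\alpha,{\mathcal U}}\ltimes L^{\infty}(H/Z)$, the general Vaes--Vainerman construction endows $\hat M$ with a canonical left Haar weight satisfying \eqref{(leftinvariance)}. One would then only need to check that this canonical Haar weight restricts on the dense subspace ${\mathcal S}_{3c}(H/Z\times G_1)$ to our $\varphi_S$, reducing the proof to a formula-matching calculation rather than a fresh invariance argument.
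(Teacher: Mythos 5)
Your outline is workable and its core---write out both sides explicitly and check that the formula for $\kappa$ from Proposition~\ref{antipodekappa} carries one into the other---is the same as the paper's, but the paper organizes the computation in a way that makes it far shorter. It stays in the $(x,y;r)$ picture, where $\varphi_S(f)=\int f(0,0;r)\,dr$: applying $\operatorname{id}\otimes\varphi_S$ simply evaluates the second leg at $(x',y')=(0,0)$ and integrates over the remaining $r$-variable, so each side collapses to a single one-dimensional integral, and the two integrands are visibly exchanged by the explicit $\kappa$. Your plan to pass to the $(p,q,r)$ variables, where $\varphi$ is full Lebesgue integration over $G$ and the product is the oscillatory kernel of \eqref{(deformedproductpqr)}, is Fourier-equivalent but forces you through precisely the ``cocycle bookkeeping'' you flag as the danger point; the paper's choice of variables avoids it. One small imprecision: $J=O$ does not give $\sigma\equiv1$ (the factor $\bar{e}\bigl[r\beta(x,y')\bigr]$ survives, and $S\cong C^*(H)$ rather than $C_0(G)$), so the base case of your ``classical left invariance plus cocycle corrections'' scheme should be $\sigma\equiv1$, not merely $J=O$. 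Your fallback via Theorem~\ref{lcqg} and the canonical Vaes--Vainerman Haar weight is legitimate---indeed it is essentially what the paper invokes afterwards to promote $\varphi_S$ from a densely defined functional to an honest weight---but for the lemma itself the direct $(x,y;r)$ computation is the cheaper route.
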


\begin{proof}
Here, the expression $(1\otimes f)(\Delta g)$ means the function $F$ such that 
$(L\otimes L)_F=(1\otimes L_f)\bigl(\Delta(L_g)\bigr)$.  By using the definitions 
of the comultiplication, the product on $S$, and the definition of the functional 
$\varphi_S$, we have:
\begin{align}
&(\operatorname{id}\otimes\varphi_S)\bigl((1\otimes f)(\Delta g)\bigr)(x,y;r)  \notag \\
&=\int f(-x+\tilde{r}\sum_{i,k}J_{ik}y_k\mathbf{x}_i,-y,\tilde{r})
g(x-\tilde{r}\sum_{i,k}J_{ik}y_k\mathbf{x}_i,y,r+\tilde{r})  \notag \\
&\qquad\cdot e\bigl[\tilde{r}\beta(x,y)\bigr]
\bar{e}\left[\frac{\tilde{r}^2}{2}\sum_{i,k}J_{ik}y_iy_k\right]\,d\tilde{r}.
\notag
\end{align}
By a similar computation, we also have:
\begin{align}
&(\operatorname{id}\otimes\varphi_S)\bigl((\Delta f)(1\otimes f)\bigr)(x,y;r)  \notag \\
&=\int f(x-\tilde{r}\sum_{i,k}J_{ik}y_k\mathbf{x}_i,y,r+\tilde{r})
g(-x+\tilde{r}\sum_{i,k}J_{ik}y_k\mathbf{x}_i,-y,\tilde{r})  \notag \\
&\qquad\cdot e\bigl[\tilde{r}\beta(x,y)\bigr]
\bar{e}\left[\frac{\tilde{r}^2}{2}\sum_{i,k}J_{ik}y_iy_k\right]\,d\tilde{r}.
\notag
\end{align}
Therefore, by using the definition of the antipode map $\kappa$, as obtained in 
Proposition~\ref{antipodekappa}, we can show the following:
$$
\kappa\bigl((\operatorname{id}\otimes\varphi_S)((\Delta f)(1\otimes g))\bigr)(x,y;r)
=(\operatorname{id}\otimes\varphi_S)\bigl((1\otimes f)(\Delta g)\bigr)(x,y;r).
$$
\end{proof}

In Kac algebra theory, equation~\eqref{(leftinvariance)} has been used to define the 
left invariance of the Haar weight.  Our proof was given only at the function level, 
but it nevertheless provides some justification to our choice of $\varphi_S$.

In general, jumping up from the linear functional at the level of the functions to the 
weight at the operator level can be quite technical.  See papers on the Haar weights on 
general locally compact quantum groups, like \cite{VDHaar}, \cite{VDoamp}.  While we can 
actually proceed using a similar approach as in \cite{BJKppha}, we made a decision above 
to take advantage of the fact that our example is a case of a cocycle bicrossed product. 
The precise construction of the Haar weight can be found in Propostion~2.9 of \cite{VV} 
(see the remark below).

\begin{rem}
[Some technical remarks.  See \cite{VV}.]
We noted in Theorem~\ref{lcqg} that $(S,\Delta)$ is the $C^*$-algebra counterpart 
to the locally compact quantum group $\hat{M}$, obtained from the matched pair 
$\bigl(L^{\infty}(G_1),L^{\infty}(H/Z)\bigr)$ and the cocycle maps ${\mathcal U}$ 
and ${\mathcal V}$.  [To be really precise, we need to flip the $(x,y)$ and the 
$r$.]  Recall also the actions $\alpha$ and $\gamma$ from Section~4.1.
By general theory on the cocycle bicrossed products, $\hat{M}$ is generated 
by $\gamma(M_1)$ and $\bigl\{(\operatorname{id}\otimes\operatorname{id}
\otimes\omega)({\mathcal V}(1\otimes\hat{W}_2)):\omega\in (M_2)_*\bigr\}$, 
where $M_1=L^{\infty}(G_1)$ and $M_2=L^{\infty}(H/Z)$, in our case.  It turns out 
that $\gamma(M_1)$ is the fixed point algebra of the dual action, $\hat{\gamma}$, 
of $(\hat{M}_2,\hat{\Delta}_2^{\operatorname{cop}})$ on $\hat{M}$.  Then 
$T=(\operatorname{id}\otimes\operatorname{id}\otimes\hat{\varphi}_2)\hat{\gamma}$, 
where $\hat{\varphi}_2$ is the Haar weight on $\hat{M}_2$, defines a normal, 
faithful operator-valued weight from $\hat{M}$ to $\gamma(M_1)$.  From the Haar 
weight $\varphi_1$ on $M_1$, we can then define the normal, semi-finite weight 
$\varphi_{\hat{M}}$ on $\hat{M}$, by $\varphi_{\hat{M}}=\varphi_1\circ\gamma^{-1}
\circ T$.  From $\varphi_{\hat{M}}$, by restriction and the flip: $(r;x,y)\mapsto(x,y;r)$, 
we would obtain the Haar weight on $S$.
\end{rem}

Having given these remarks and Lemma~\ref{haarlemma}, together with the knowledge that 
Haar weight is unique (up to a scalar multiplication), we will accept that the linear 
functional $\varphi_S$ above does indeed extend to the correct Haar weight on $(S,\Delta)$.

\bigskip

\providecommand{\bysame}{\leavevmode\hbox to3em{\hrulefill}\thinspace}
\providecommand{\MR}{\relax\ifhmode\unskip\space\fi MR }
\providecommand{\MRhref}[2]{%
  \href{http://www.ams.org/mathscinet-getitem?mr=#1}{#2}
}
\providecommand{\href}[2]{#2}

\end{document}